\begin{document}

\newcommand{\bdelta}{{\bm \delta}}
\newcommand{\DD}{ D}

\newcommand{\RR}{\mathbb R}
\newcommand{\ZZ}{\mathbb Z}
\newcommand{\CC}{\mathbb C}
\newcommand{\EE}{\mathbb E}
\newcommand{\NN}{\mathbb N}
\newcommand{\ZZP}{\mathbb{Z}_+}

\newcommand{\baa}{\begin{eqnarray*}}
\newcommand{\eaa}{\end{eqnarray*}}
\newcommand{\ba}{\begin{equation}}
\newcommand{\ea}{\end{equation}}

\newtheorem{Theorem}{Theorem}[section]
\newtheorem{theorem}[Theorem]{Theorem}
\newtheorem{lemma}[Theorem]{Lemma}
\newtheorem{proposition}[Theorem]{Proposition}
\newtheorem{corollary}[Theorem]{Corollary}
\newtheorem{definition}[Theorem]{Definition}
\newtheorem{example}[Theorem]{Example}
\newtheorem{remark}[Theorem]{Remark}

\newcommand{\eq}[1]{\begin{align*}#1\end{align*}}
\newcommand{\eqn}[1]{\begin{align}#1\end{align}}
\newcommand{\quo}[1]{\begin{quote}#1\end{quote}}
\newcommand{\bi}{\begin{itemize}}
\newcommand{\ei}{\end{itemize}}
\newcommand{\be}{\begin{enumerate}}
\newcommand{\ee}{\end{enumerate}}

\newcommand{\RED}{\textcolor{red}}
\newcommand{\BLUE}{\textcolor{blue}}

\newcommand{\x}{{\mathbf x}}
\newcommand{\y}{{\mathbf y}}
\newcommand{\bA}{{\mathbf A}}
\newcommand{\bAA}{\widetilde {\bA}}
 \newcommand{\bB}{{\mathbf B}}
\newcommand{\bD}{{\mathbf D}}
\newcommand{\bF}{{\mathbf F}}
\newcommand{\bK}{{\mathbf K}}
\newcommand{\bM}{{\mathbf M}}
\newcommand{\bP}{{\mathbf P}}
\newcommand{\bS}{{\mathbf S}}
\newcommand{\bW}{{\mathbf W}}
\newcommand{\bE}{{\mathbf E}}

\newcommand{\bFj}{{\mathbf F}^{[j]}}
\newcommand{\bg}{{\mathbf g}}
\newcommand{\bN}{{\mathbf N}}

\newcommand{\ou}{{\overline u}}
\newcommand{\of}{{\overline f}}

\newcommand{\bfa}{{\mathbf a}}
\newcommand{\bfb}{{\mathbf b}}
\newcommand{\bc}{{\mathbf c}}
\newcommand{\bd}{{\mathbf d}}
\newcommand{\pl}{p_\ell}
\newcommand{\bp}{{\bf p}}
\newcommand{\bq}{{\bf q}}
\newcommand{\ff}{{\mathbf f}}

\newcommand{\sak}{S_{\bA^{[k]}}}
\newcommand{\bPhi}{{\bm \Phi}}
\newcommand{\bI}{{\mathbf I}}
\newcommand{\Ht}{\widetilde{H}}
\newcommand{\bu}{{\mathbf u}}
\newcommand{\bv}{{\mathbf v}}

\newcommand{\xb}{\overline{x}}
\newcommand{\tb}{\overline{t}}

\newcommand{\uu}{{\mathbf u}}
\newcommand{\vv}{{\mathbf v}}
\newcommand{\rr}{{\mathbf r}}

\newcommand{\fhat}{{\hat{f}}}
\newcommand{\fshat}{{\hat{f''}}}
\newcommand{\ffh}{{\hat{\mathbf f}}}
\newcommand{\ffdh}{\widehat{\ff''}{ }}

\newcommand{\ggh}{{\hat{\mathbf g}}}
\newcommand{\gammah}{{\hat{\gamma}}}

\newcommand{\stil}{\widetilde{s}}
\newcommand{\sbar}{\overline{s}}

\newcommand{\gtil}{\widetilde{\gamma}}
\newcommand{\LL}{\mathcal{L}}
\newcommand{\Sbar}{\overline{S}}

\begin{frontmatter}

\title{A non-uniform corner-cutting subdivision scheme with
an improved accuracy }

\author{Byeongseon Jeong $^\S$, Hyoseon Yang ${}^\dagger$,
and Jungho Yoon ${}^\ddagger$}
\address{$^{\S}$ Institute of Mathematical Sciences, Ewha Womans University, Seoul 03760, South Korea
(bjeong\_ewha@ewha.ac.kr)}
\address{${}^\dagger$  Department of Computational Mathematics, Science and Engineering, Michigan State University, East Lansing, MI 48824, USA
(hyoseon@msu.edu, corresponding author)}
\address{${}^\ddagger$ Department of Mathematics, Ewha Womans University, Seoul 03760, South Korea
(yoon@ewha.ac.kr)}

\begin{abstract}
The aim of this paper is to construct a new non-uniform corner-cutting (NUCC) subdivision scheme that improves the accuracy of the classical (stationary and nonstationary) methods. The refinement rules are formulated via the reproducing property of exponential polynomials. An exponential polynomial has a shape parameter so that it may be adapted to the characteristic of the given data. In this study, we propose a method of selecting the shape parameter, so that it enables the associated scheme to achieve an improved approximation order (that is, {\em three}), while the classical methods attain the second order accuracy. An analysis of convergence and smoothness of the proposed scheme is conducted. The proposed scheme is shown to have the same smoothness as the classical Chaikin's corner-cutting algorithm, that is, $C^1$. Finally, some numerical examples are presented to demonstrate the advantages of the new corner-cutting algorithm.
\end{abstract}

\begin{keyword}
Corner-cutting Scheme, Exponential B-spline, Nonuniform Subdivision, Approximation Order
\end{keyword}

\end{frontmatter}


\section{Introduction}

%



Subdivision schemes are recursive processes of generating smooth curves
or surfaces from a given sequence of control points.
By applying a set of refinement rules to an initial
sequence of points iteratively, a subdivision scheme computes denser
sequences of points under a suitable convergence criteria.
Due to the algorithmic efficiency in geometric modeling and design,
subdivision schemes are extensively studied and utilized in many research fields
such as computer-aided geometric design, computer graphics, image processing, scientific computing etc.
General discussions on subdivision schemes may be found in the references
\cite{CDM-91, D-92, DL-02}.

The smoothness and approximation order of limit functions obtained
by the recursive refinement process are two key features
of a convergent subdivision scheme.
They are strongly related to the generation or reproduction properties
of a class of functions such as
a set of algebraic polynomials
\cite{CDM-91, CC-78, CC-13, CLR-80, CH-11, DD-89, DHSS-08, L-03}
and exponential polynomials \cite{DL-91, DLL-03, R-88}.
Mostly well-known subdivision schemes based on algebraic
polynomials are the B-spline \cite{CC-78, CLR-80}
and Delauriers-Dubuc interpolatory schemes \cite{DD-89},
which respectively provide optimal smoothness and approximation order.
Recent studies have reported that exponential polynomials
are useful and effective in designing
and approximating various geometric shapes.
The refinement rules of the associated subdivision scheme are level-dependent,
in which case we call the scheme {\em nonstationary}.
Since the earlier works as \cite{DL-91, DLL-03, R-88} on
exponential B-splines,
important theoretical progress has been achieved during the last two decades,
especially in regards to
the reproducing and generation properties of exponential polynomials
\cite{CCR-14, FMW-10, JKLY-13} and the construction of  wavelets
\cite{KU, UB,VTU}.
Also, important practical subdivision
algorithms have been reported. The readers are referred to
\cite{BCR-07, CGR-11, CGR-16, CR-11, DLL-03, JSD-03, JLY-13}) and the references therein.

The quadratic B-spline,
known as Chaikin's corner-cutting algorithm \cite{C-74},
is one of the most popular
and fundamental scheme with various extensions and applications
\cite{B-87, DY-2019, DS-78, GQ-96, R-15, TP-20}.
The exponential B-spline scheme of degree $2$ is its
natural nonstationary extension. It has advantages because it can reproduce
geometrically important shapes in CAGD.
Such merits are attributed to the ability to reproduce
exponential polynomials.
At this point, one should notice that
an exponential polynomial has a shape parameter that can be
tuned to the characteristic of the given data.
Thus, the final approximation may be more faithful to the data
initially given.
Motivated by this observation,
this study aims to introduce a new non-uniform corner-cutting
algorithm that generalizes the exponential B-spline of degree 2.
In particular, we propose an optimal method of selecting
the shape parameter in the exponential polynomial, so that
the proposed scheme achieves the improved approximation
order (that is, {\em three}),
while the classical corner-cutting (stationary and nonstationary) methods
have the second order accuracy.
(e.g., see \cite{JKLY-13, L-03}).
Further, we show that the proposed scheme provides
the same smoothness (that is, $C^1$)
as the classical Chaikin's corner-cutting algorithm.
Finally, some numerical examples are presented to demonstrate the
advantages of the new scheme.

The remaining contents of this study are organized as follows.
Section \ref{PRE} is devoted to provide some basic  concepts of (non-uniform)
subdivision schemes and related basic notions.
In Section \ref{SEC-CONST}, the new subdivision masks are constructed via the
local reproducing property of exponential polynomials, along with
a method for selecting the shape parameter in the exponential polynomial.
The convergence and smoothness of the proposed scheme are investigated
in Section \ref{SEC-CS}.
Also, in Section \ref{SEC-AO}, we analyze its approximation order.
Finally, some numerical examples supporting the theoretical results
are presented in Section \ref{SEC-NE}.

\section{Preliminaries} \label{PRE}

In this section we present some necessary notation
and background material.
Let $\ZZ$ be the set of all integers, and denote by $\ZZP$ the set
of nonnegative integers.
For a given sequence $\ff^k := \{ \ff^k_j : j \in \ZZ \}$,
the refinement rule of a non-uniform subdivision scheme
$\{ S_k : k \in \ZZP \}$ is defined by
\eqn{ \label{RR}
\ff^{k+1}_{2j+\nu} := S_k \ff^k_{2j+\nu}
:= \sum_{i \in \ZZ} \bfa^{j,k}_{2i+\nu} \ff^k_{j-i},\quad
\nu = 0,1.
}
This can be written formally as $\ff^{k+1} := S_k \ff^k$.
The sequence $\bfa^{j,k} := \{ \bfa^{j,k}_{n} : n \in \ZZ \}$ is called
the subdivision {\em mask} and is assumed to be finitely supported.
In fact, some fundamental
characteristics of a subdivision are embedded in its symbol
associated with the mask $\bfa^{j,k}$:
\eq{a^{j,k}(z) := \sum_{n \in \ZZ} \bfa^{j,k}_n z^n.
}
Since the mask $\bfa^{j,k}$ is finitely supported, the symbol
$a^{j,k}(z)$ becomes a Laurent polynomial.
The norm of a subdivision operator $S_k$ associated with the refinement
masks $\{ \bfa^{j,k} : j \in \ZZ \}$ is defined by
\eq{
\| S_k \|_\infty = \sup_{j \in \ZZ} \Big\{ \sum_{i \in \ZZ} |\bfa^{j,k}_{2i+\nu} |:  \nu = 0,1 \Big\}.
}
It is of necessity to recall the concept of parametrization for
a subdivision scheme.
For each level $k$, the value $\ff^k_j$  can be attached to a grid point
value $t^k_j := 2^{-k}j$ or $2^{-k}(j-1/2)$, 
which are respectively called {\em primal} and {\em dual} parametrization.
The value $k$ indicates the scale parameter while $j-\frac 12$
(or $j$) stands
for the location parameter.
For a given parametrization $\{t^k_j\}$,
a subdivision scheme $\{ S_k \}$ is {\em convergent} if for any initial data $\ff^0$,
there exists a function $f \in C(\RR)$ such that for any compact subset $\Omega$ of $\RR$,
\eq{
\lim_{k \to \infty} \sup_{j \in \ZZ \cap 2^k \Omega} | \ff^k_j - f(t^k_j) | =0
}
and $f \not\equiv 0$ for some $\ff^0$.
Also, a subdivision scheme $\{ S_k \}$ is said to be {\em stable} if
there exists a constant $C > 0$ such that
for any $\ff \in \ell^\infty(\ZZ)$,
\eq{
\sup_{k,n\in\ZZP}\| S_{k+n}\cdots S_{k}\ff \|_\infty \leq C \|\ff\|_\infty.
}

In this study, the term `reproduction' is used to indicate the capability
of a subdivision scheme of reproducing the functions from which
the initial sequence of data values are sampled.
Below, we specify the definition of exponential polynomial reproduction.

\begin{definition}
{\rm
Let $\EE$ be a space of exponential polynomials.
We say that a subdivision scheme $\{ S_k \}$ {\em reproduces} the
functions in $\EE$ if
for the initial data $\ff^0 := \{ f(t^0_j) : j \in \ZZ \}$
with $f\in\EE$, each sequence $\ff^k := S_{k-1}\cdots S_0 \ff^0$
with $k \geq 1$
satisfies the relation $\ff^k_j = f(t^k_j)$ for any $j \in \ZZ$.
}
\end{definition}

\def\vp{\varphi}
\noindent
It is well-known that the exponential B-splines can reproduce
at most two exponential polynomials if a suitable normalization
factor is taken. Hence, this study is focused on the
following type of space
$$ \EE_2:={\rm span} \{ \vp_0, \vp_1\}, $$
where $\varphi_0$ and $\varphi_1$ are linearly independent
exponential polynomials.
A basic requirement for $\{\vp_0, \vp_1\}$ is that its Wronskian
matrix at $\vartheta =0$ is invertible, that is,
$$ 
{\rm det} \big (\vp^{(\beta)}_\alpha(0):\alpha,\beta=0,1 \big)
\not= 0.
$$
It is clear that if this condition is satisfied,
then the functions $\vp_0$ and $\vp_1$ are linearly independent.

In addition, for later use, we introduce some notations.
For a given  sequence $\ff^k := \{ \ff^k_j : j \in \ZZ \}$
with density $2^{-k}$,
$\Delta \ff^k$ indicates its second-order difference defined by
$$\Delta \ff^k_j := 2^{2k}(\ff^k_{j-1} - 2 \ff^k_j + \ff^k_{j+1}).$$
Also, denote $\nabla \ff^k_{j+1} = 2^k (\ff^k_{j+1} -\ff^k_j)$.
In addition, letting $\ff^0$ be the initial sequence of data,
put $\bd^0 =\Delta \ff^0$, and then, for each refinement level $k\in\NN$,
$\bd^k$ is defined  as
\eqn{ \label{DK}
\bd^k := S^k_a \bd^0 = S^k_a\Delta  \ff^0,
}
where $S_a$ is the subdivision of the B-spline of degree $2$,
i.e., the classical corner-cutting method.

\section{Nonuniform corner-cutting subdivision scheme} \label{SEC-CONST}

The objective of this section is to construct a novel non-uniform corner-cutting
(referred to as `NUCC') subdivision scheme.
The new subdivision masks are defined via the property of
reproducing two exponential polynomials $\{\vp_0, \vp_1\}$
based on the dual parametrization.
The exponential polynomials $\{\vp_0, \vp_1\}$
and the internal shape parameter are adjusted
by reflecting the local data feature to improve the accuracy.
To this end, for a given data $\ff^k_i$ at level $k$, we assume
without great loss that $\ff^k_i$ and $\nabla \ff^k_{i+1}$
do not vanish simultaneously. If this is not the case,
one may put the internal shape parameter as zero, then the scheme becomes
the classical corner-cutting method.

\medskip \noindent
$\bullet$\ {\bf Non-uniform Subdivision Mask.}
For a given sequence of data $\ff^k$ at level $k$,
we first assume that $\ff^k_j$
are nonzero (practically, away from zero).
The subdivision mask $\bfa^{j,k} := \{ \bfa^{j,k}_{n} : n \in \ZZ \}$
is formulated via the reproducing property of two
exponential polynomials $\{\vp_0, \vp_1\}$
at the evaluation point, say
$\tb := t^{k+1}_\ell$ (where $\ell =2j , 2j+1$).
According to the parity of location $\ell$,
we determine the two sets of nonzero coefficients: even mask
$\{ \bfa^{j,k}_{-2},\ \bfa^{j,k}_{0} \}$
and odd mask $\{ \bfa^{j,k}_{-1},\ \bfa^{j,k}_{1} \}$.
To derive the even mask, consider the following local reproduction property
of the two exponential functions $\{\vp_0,\vp_1\}$:
\eqn{ \label{LS-1}
\LL_0(\tb) \varphi_n(t^k_j) + \LL_1(\tb) \varphi_n(t^k_{j+1})
= \varphi_n(\tb),\quad n = 0,1.
}
where $\mathcal{L}_0$ and $\mathcal{L}_1$  are the Lagrange functions
in the space $\EE_2 ={\rm span}\{\vp_0, \vp_1\}$.
Representing this linear system  explicitly yields the matrix form
\eq{
\begin{bmatrix}
\varphi_0(t^k_j) & \varphi_0(t^k_{j+1}) \\
\varphi_1(t^k_j) & \varphi_1(t^k_{j+1}) \\
\end{bmatrix}
\begin{bmatrix}
\LL_0(\tb)  \\
\LL_1(\tb)
\end{bmatrix}
=
\begin{bmatrix}
\varphi_0(\tb) \\
\varphi_1(\tb)
\end{bmatrix}.
}
The even mask of the proposed subdivision scheme
(associated to the location $\tb$)  is defined as
the solution of this linear system.
To be  more precise,
letting $\tb := t^{k+1}_{2j}$ be an evaluation point at level $k+1$,
we select
$$\{\vp_0, \vp_1\} :=
\{\exp\big(\gamma_{2j} (\cdot - \tb) \big),
\exp\big(-\gamma_{2j} (\cdot - \tb) \big)\},$$
where $\gamma_{2j}$ is real or pure imaginary.
Then the solution of the above linear system
can be formulated specifically as
\eqn{ \label{L-0-1}
\bfa^{j,k}_{0}:= \LL_0(\tb) = \frac{\sinh(\frac 34 \gamma_{2j}2^{-k})}
                {\sinh(\gamma_{2j} 2^{-k})},\quad
\bfa^{j,k}_{-2}:= \LL_1(\tb) = \frac{\sinh(\frac 14 \gamma_{2j}2^{-k})}
                {\sinh(\gamma_{2j} 2^{-k})}.
}
The odd mask is defined in a similar way.
Letting $\tb = t^{k+1}_{2j+1}$,
we consider the reproduction of two exponential functions
$\{\vp_0, \vp_1 \}=
\{\exp\big(\gamma_{2j+1} (\cdot - \tb) \big),
\exp\big(-\gamma_{2j+1} (\cdot - \tb) \big)\}$.
Under this setting,
the odd mask is also formulated as the solution of
the linear  system \eqref{LS-1}:
\eqn{ \label{L-0-2}
\bfa^{j,k}_{1}
= \frac{\sinh(\frac 14 \gamma_{2j+1}2^{-k})}
                {\sinh(\gamma_{2j+1} 2^{-k})},\quad
\bfa^{j,k}_{-1}
= \frac{\sinh(\frac 34 \gamma_{2j+1}2^{-k})}
                {\sinh(\gamma_{2j+1} 2^{-k})}.
}
As a conclusion, the new subdivision mask is of the form
\eqn{ \label{MASK}
\bfa^{j,k} =\Big \{
\frac{\sinh(\frac 14 \gamma_{2j}2^{-k})}
                {\sinh(\gamma_{2j} 2^{-k})}, \
\frac{\sinh(\frac 34 \gamma_{2j+1}2^{-k})}
                {\sinh(\gamma_{2j+1} 2^{-k})}, \
\frac{\sinh(\frac 34 \gamma_{2j}2^{-k})}
                {\sinh(\gamma_{2j} 2^{-k})}, \
\frac{\sinh(\frac 14 \gamma_{2j+1}2^{-k})}
                {\sinh(\gamma_{2j+1} 2^{-k})}
\Big \}.
}
In this study, we especially suggest to choose the shape parameter
$\gamma_{2j+\nu}$ for $\nu \in \{ 0,1 \}$ as
\eqn{ \label{GAM-EVEN}
\gamma_{2j+\nu} := \gamma_{2j+\nu, k} :=
\sqrt{\frac{\bd^k_{j+\nu}} {\ff^k_{j+\nu} + \epsilon}},
\qquad {\rm sign}(\ff^k_{j}) = {\rm sign}(\epsilon)
}
where a nonzero number $\epsilon$ is employed
to prevent the denominator becoming too small or zero.

\begin{remark}
{\rm
Suppose that the initial data values are sampled from a smooth function
$f$. Let $\tb = t^{k+1}_{2j+\nu}$ be an evaluation point at level $k+1$.
It can be easily deduced from \eqref{GAM-EVEN} that
$$ \gamma^2_{2j+\nu} \approx  {\frac {f''(\tb)} {f(\tb)}}.$$
We will see later that this choice of the shape parameter enables
the proposed scheme to provide improved order of accuracy.
For better readability of this paper, the specific discussion
on the  motivation for this  choice $\gamma_{2j+\nu}$ is postponed
to Section \ref{SEC-AO}.
}
\end{remark}

\begin{remark} \label{RMK-EP}
{\rm 
We suggest to choose the non-zero value $\epsilon$ depending on 
the density of the given initial data. 
For instance, one may put  $\epsilon$ as $|\epsilon |=2^{-2k_0}$. 
Accordingly, the shape parameter  $\gamma_{j}$ can 
be  uniformly bounded independent of $j$.
}
\end{remark}

This study is mainly focused on the mask in \eqref{MASK}. However,
the denominator in \eqref{GAM-EVEN} is zero or very small,
the refinement rule may be amended as below.

\medskip \noindent
$\bullet$ {\bf  Alternative Approach.}
If $\ff^k_{j+\nu}$ ($\nu=0,1$) is zero or very close to zero,
an alternative way to construct
the subdivision masks is given as follows.
Let $\tb=t^{k+1}_{2j+\nu}$ be the grid  point associated to
$\ff^{k+1}_{2j+\nu}$.
Then we consider the space of exponential polynomials
$$\EE_2 ={\rm span} \{1, \exp (\gamma_{2j+\nu}(\cdot -\tb) )\} $$
and then select the shape parameter as
\eqn{ \label{GAM-ODD}
\gamma_{2j+\nu} := \gamma_{2j+\nu,k} :=
{\frac{\bd^k_{j+\nu}}{\nabla \ff^k_{j+1}+\epsilon}},
\qquad \nabla \ff^k_{j+1} = 2^k (\ff^k_{j+1} - \ff^k_{j}).
}
Accordingly, by following the same technique as above,
the new subdivision mask can be derived as the form
\eqn{ \label{MASK-20}
\bfa^{j,k}_{-2} = \frac{e^{\gamma_{2j} \frac 14 2^{-k}}-1}{e^{\gamma_{2j} 2^{-k}} - 1},\quad
\bfa^{j,k}_{-1} = \frac{e^{\gamma_{2j+1} \frac 34 2^{-k}}-1}{e^{\gamma_{2j+1} 2^{-k}} - 1},\quad
\bfa^{j,k}_{0} = 1 - \bfa^{j,k}_{-2},\quad
\bfa^{j,k}_1 = 1 - \bfa^{j,k}_{-1}.
}
One should note that the above even and odd rules
satisfy the partition of unity, respectively. Hence,
for later use, it is necessary to remark that
its Laurent polynomial $a^{j,k}(z)$
satisfies the equations
\ba\label{LAU-S2}
a^{j,k} (1) = \sum_{n\in\ZZ} \bfa^{j,k}_n =2,
\qquad
a^{j,k} (-1) =
\sum_{n\in\ZZ} \bfa^{j,k}_n (-1)^n=0.
\ea

\begin{remark} \label{RMK-1}
{\rm
It is obvious  that the mask $\bfa^{j,k}$ converges to
${\bf a}=\{\frac 14, \frac 34, \frac 34 , \frac 14 \}$,
as $k \to \infty$,  which is the mask of B-spline of degree $2$
(known as Chaikin's corner cutting method).
If $\bd^k_{j} $ is zero, then
the NUCC scheme turns out to be  the corner-cutting method.
So, the NUCC method is nonlinear and it can be understood 
as a perturbation of the Chaikin's corner-cutting algorithm.
In this regard, without great loss,
we assume throughout this paper that
$$-\frac 14 + {\bf a}_n < \bfa^{j,k}_n < {\bf a}_n + \frac 14$$
for the mask $\bfa_n$ of the Chaikin's algorithm.
}
\end{remark}

\section{Analysis of convergence and smoothness} \label{SEC-CS}

In this section, we discuss the convergence and the smoothness of
the NUCC scheme $\{S_k\}$ with the masks defined in \eqref{MASK}.
Our specific goal is to prove that the scheme  $\{S_k\}$
generates $C^1$ limit curves.  Our analysis basically relies on
the concept of the `asymptotic equivalence' relation
between two subdivision schemes.

\medskip
\begin{definition}
{\rm
A non-uniform subdivision scheme $\{ S_k \}$ with the mask
$\{ \bfa^{j,k}_{n}\}$ is said to be
{\em asymptotically equivalent} to a uniform stationary scheme $S$ with
the mask $\{\bfa_n\}$  if
\eqn{ \label{AE}
\sum_{k \in \ZZP} \sup_{j \in \ZZ} \sum_{n \in \ZZ}
|\bfa^{j,k}_{n} - \bfa_n | < \infty.
}}
\end{definition}

\noindent
Based on this concept,
we first show that the NUCC scheme is stable and convergent.
To do this, we cite a  result \cite[Proposition 3.2]{DLY-14}.

\begin{lemma} \label{LEM-AE} {\rm \cite{DLY-14}}
Suppose that a non-uniform subdivision scheme $\{ S_k \}$ is asymptotically
equivalent to a uniform stationary scheme $S$.
If $S$ is stable and convergent,
then so is the scheme $\{ S_k \}$.
\end{lemma}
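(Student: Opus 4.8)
\textbf{Proof plan for Lemma~\ref{LEM-AE} (citing \cite{DLY-14}).}
The statement is quoted verbatim from \cite[Proposition 3.2]{DLY-14}, so the ``proof'' here is really a recollection of the argument underlying that result; I would present it as follows. The plan is to reduce everything to a telescoping estimate on the difference schemes. First I would recall that convergence of the uniform stationary scheme $S$ is equivalent to the contractivity of an associated difference scheme: since $S$ reproduces constants (here $a(1)=2$, $a(-1)=0$), there is a scheme $S_{[1]}$ acting on first differences with $\nabla(S\ff)=\tfrac12 S_{[1]}(\nabla\ff)$, and $S$ is $C^0$-convergent iff $\|(S_{[1]})^m\|_\infty<2^m$ for some $m\ge1$ (the standard Dyn--Levin criterion). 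For the non-uniform family $\{S_k\}$ one builds the analogous level-dependent difference operators $S_{k,[1]}$; the key algebraic point is that the asymptotic-equivalence hypothesis \eqref{AE} passes to the difference schemes, i.e.
\eq{
\sum_{k\in\ZZP}\sup_{j\in\ZZ}\sum_{n\in\ZZ}\bigl|(\bfa^{j,k}_{[1]})_n-(\bfa_{[1]})_n\bigr|<\infty,
}
because differencing the masks is a bounded linear operation that commutes with taking the sup over $j$ and the sum over $k$.

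Next I would run the perturbation argument. Write $S_{k,[1]}=S_{[1]}+E_k$ with $\|E_k\|_\infty=:e_k$ and $\sum_k e_k<\infty$ by the previous step. Fix $m$ with $\rho:=2^{-m}\|(S_{[1]})^m\|_\infty<1$. Expanding a product of $m$ consecutive operators $S_{k+m-1,[1]}\cdots S_{k,[1]}$ into $(S_{[1]})^m$ plus $2^m-1$ mixed terms, each mixed term carries at least one factor $E_i$ and is bounded by $C\bigl(\sum_{i\ge k}e_i\bigr)$ with a constant depending only on $m$ and $\|S_{[1]}\|_\infty$. Since the tail $\sum_{i\ge k}e_i\to0$, for $k$ large enough one gets $2^{-m}\|S_{k+m-1,[1]}\cdots S_{k,[1]}\|_\infty\le\tilde\rho<1$ uniformly, while for the finitely many small $k$ the norms are uniformly bounded anyway. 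Chaining blocks of length $m$ then yields geometric decay of $\|S_{k+n-1,[1]}\cdots S_{k,[1]}\|_\infty$ in $n$, uniformly in $k$; this is exactly the contractivity condition that, together with $a^{j,k}(1)=2$, gives $C^0$-convergence of $\{S_k\}$ (and an $\ell^\infty$ bound on the limit, hence stability).

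For stability one argues in the same spirit directly at the function-value level: the partition-of-unity property $\sum_i\bfa^{j,k}_{2i+\nu}=1$ gives $\|S_k\|_\infty\le 1+\delta_k$ with $\delta_k:=\sup_j\sum_n|\bfa^{j,k}_n-\bfa_n|$ summable, so $\|S_{k+n}\cdots S_k\|_\infty\le\prod_{i\ge k}(1+\delta_i)\le\exp\bigl(\sum_{i}\delta_i\bigr)<\infty$, giving the uniform bound in the definition of stability. The main obstacle—and the only genuinely non-mechanical point—is justifying that asymptotic equivalence is inherited by the difference schemes and that the stationary limit's contractivity survives an $\ell^1$-summable (but not uniformly small in $k$) perturbation; once the summability of the tails is in hand, the block-product estimate handles this cleanly. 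Since all of this is the content of \cite[Proposition~3.2]{DLY-14}, I would state it as cited and refer the reader there for the full details rather than reproduce the telescoping computation.
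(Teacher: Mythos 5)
The paper does not prove this lemma at all: it is imported verbatim as \cite[Proposition 3.2]{DLY-14} and used as a black box, so your decision to state it as cited and defer to that reference is exactly what the authors do. Your accompanying sketch is the standard Dyn--Levin perturbation machinery and has the right overall shape (summable perturbations of a stationary scheme, block-product estimates), but two of its steps would not survive if the sketch were promoted to an actual proof of the lemma as stated.

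First, the reduction of convergence to difference schemes requires each non-uniform symbol to satisfy $a^{j,k}(-1)=0$ exactly, so that $S_{k,[1]}$ exists; this is not implied by the hypothesis \eqref{AE}, and in the present paper it actually fails for the main mask \eqref{MASK} (the authors only obtain $a^{j,k}(\pm 1)=a(\pm 1)+O(2^{-2k})$; only the alternative mask \eqref{MASK-20} satisfies \eqref{LAU-S2} exactly). This is precisely why \cite{DLY-14} replace exact divisibility by $(1+z)$ with the summable quantities $\DD^{j,k}_m(\pm 1)$ (``Property A''); a proof of the lemma under its stated hypotheses cannot presuppose exact annihilation of constants by the non-uniform masks. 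Second, your stability estimate $\|S_k\|_\infty\le 1+\delta_k$ uses partition of unity of the $\bfa^{j,k}$ (again not a hypothesis, and not exact for \eqref{MASK}) together with, implicitly, $\sum_i|\bfa_{2i+\nu}|=1$ for the stationary mask, i.e.\ $\|S\|_\infty=1$; the lemma only assumes $S$ stable and convergent, so $\|S\|_\infty$ may exceed $1$ and the product $\prod_k(\|S\|_\infty+\delta_k)$ would then diverge. The robust argument is the block expansion you invoke for the difference operators, applied at the function-value level: writing $E_i:=S_i-S$ and $e_i:=\sup_j\sum_n|\bfa^{j,i}_n-\bfa_n|$, expand $S_{k+n}\cdots S_k$ as a sum over words in $S$ and the $E_i$, collect the powers of $S$ between consecutive $E_i$'s, and use $\sup_m\|S^m\|_\infty\le C$ (which is what stability of $S$ actually provides) to get $\|S_{k+n}\cdots S_k\|_\infty\le C\prod_{i\ge k}(1+Ce_i)\le C\exp\bigl(C\sum_i e_i\bigr)$. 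With that correction the stability part is clean; the convergence part still needs the Property-A-type substitute for an exact difference scheme, which is the genuinely non-mechanical content of \cite[Proposition 3.2]{DLY-14} and a good reason to keep the result as a citation rather than reprove it here.
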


\begin{theorem} \label{THM-CONV}
The NUCC scheme $\{ S_k \}$
is stable and convergent, that is $C^0$.
\end{theorem}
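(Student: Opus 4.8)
The plan is to invoke Lemma \ref{LEM-AE}, so the whole argument reduces to two things: (i) identifying the uniform stationary scheme $S$ to which $\{S_k\}$ is asymptotically equivalent, and (ii) verifying the summability condition \eqref{AE}. The natural candidate for $S$ is Chaikin's corner-cutting algorithm, whose mask is $\bfa = \{\tfrac14, \tfrac34, \tfrac34, \tfrac14\}$ — this is exactly the limit of the NUCC masks noted in Remark \ref{RMK-1}. Since Chaikin's scheme is the degree-$2$ B-spline scheme, it is classically known to be stable and $C^1$-convergent; in particular it is stable and $C^0$-convergent, which is all Lemma \ref{LEM-AE} requires. So the real work is step (ii).

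For step (ii), I would estimate $|\bfa^{j,k}_n - \bfa_n|$ for each of the four nonzero mask entries in \eqref{MASK}. Each entry has the form $\sinh(\tfrac14 \gamma 2^{-k})/\sinh(\gamma 2^{-k})$ or $\sinh(\tfrac34 \gamma 2^{-k})/\sinh(\gamma 2^{-k})$ with $\gamma = \gamma_{2j+\nu,k}$. Writing $x = \gamma_{2j+\nu,k} 2^{-k}$, the key is a Taylor expansion of $g(x) := \sinh(\tfrac14 x)/\sinh(x)$ (and the analogous $\tfrac34$ function) around $x = 0$: since $\sinh$ is odd and analytic, $g$ is even and analytic near $0$ with $g(0) = \tfrac14$, hence $|g(x) - \tfrac14| \le C x^2$ on any bounded interval $|x| \le R$. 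By Remark \ref{RMK-EP} the shape parameters $\gamma_{2j+\nu,k}$ are uniformly bounded (taking $|\epsilon| = 2^{-2k_0}$ makes $\bd^k_{j+\nu}/(\ff^k_{j+\nu}+\epsilon)$ bounded, using that $\bd^k = S_a^k \Delta\ff^0$ is bounded by stability of Chaikin's scheme), say $|\gamma_{2j+\nu,k}| \le M$ uniformly in $j,k$. Therefore $|x| = |\gamma_{2j+\nu,k}| 2^{-k} \le M 2^{-k}$, and each mask entry satisfies
\[
|\bfa^{j,k}_n - \bfa_n| \le C x^2 \le C M^2 4^{-k}.
\]
Summing the four entries and then over $k \in \ZZP$ gives $\sum_k \sup_j \sum_n |\bfa^{j,k}_n - \bfa_n| \le 4 C M^2 \sum_k 4^{-k} < \infty$, which is \eqref{AE}.

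I should be slightly careful about one subtlety: for the pure-imaginary case $\gamma_{2j+\nu,k} = i\omega$ (when $\bd^k_{j+\nu}/(\ff^k_{j+\nu}+\epsilon) < 0$), the $\sinh$ ratios become $\sin$ ratios, $\sin(\tfrac14 \omega 2^{-k})/\sin(\omega 2^{-k})$; these are still real, still even analytic functions of the argument near $0$ with value $\tfrac14$ there, and the same quadratic bound applies (one must only note the denominator does not vanish, which holds since $|\omega| 2^{-k} \le M 2^{-k} < \pi$ for $k$ large, and the finitely many small-$k$ terms are trivially bounded). A unified way to phrase this is to treat the mask entries as analytic functions of the single real variable $\gamma_{2j+\nu,k}^2 2^{-2k} = \bd^k_{j+\nu} 4^{-k}/(\ff^k_{j+\nu}+\epsilon)$, which ranges over a bounded set, so the expansion is genuinely one-sided and the sign of $\gamma^2$ never matters. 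The main obstacle is thus not conceptual but bookkeeping: establishing the uniform bound $|\gamma_{2j+\nu,k}| \le M$ rigorously (which rests on boundedness of $\bd^k$, i.e. stability of the linear scheme $S_a$, together with Remark \ref{RMK-EP}), after which the geometric decay $4^{-k}$ makes \eqref{AE} immediate and Lemma \ref{LEM-AE} finishes the proof.
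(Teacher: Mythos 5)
Your proposal is correct and follows essentially the same route as the paper's own proof: asymptotic equivalence to the quadratic B-spline via the Maclaurin expansion bound $|\bfa^{j,k}_n-\bfa_n|\le c\,\gamma_{2j+\nu}^2 2^{-2k}$, uniform boundedness of $\gamma_{2j+\nu}$ from Remark \ref{RMK-EP}, and then Lemma \ref{LEM-AE}. Your extra care with the pure-imaginary case and with justifying the uniform bound on $\gamma$ is welcome (the paper glosses over both), though the paper additionally records the analogous estimate $O(\gamma 2^{-k})$ for the alternative mask \eqref{MASK-20}, which you omit.
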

\begin{proof}
Using the Maclaurin series expansion of
$\sinh (\gamma x)$
which appears in the explicit form of the  mask
$\bfa^{j,k}_{n}$ in \eqref{MASK},
it is clear that
\eqn{ \label{TE-1}
|\bfa^{j,k}_{n} - \bfa_n|\leq c \gamma_{2j+\nu}^2 2^{-2k},\quad \forall j \in \ZZ,
\quad \nu = 0,1
}
for the mask $\bfa_n$ of the quadratic B-spline scheme.
In view of the discussion  in Remark \ref{RMK-EP}, 
$\gamma_{2j+\nu}$ is uniformly bounded. Hence, 
the relation \eqref{AE} is readily satisfied.
Similarly, we can prove
$|\bfa^{j,k}_{n} - \bfa_n|\leq c\gamma_{2j+\nu} 2^{-k}$ for the mask in \eqref{MASK-20}.
It ensures that
the NUCC scheme  $\{ S_k \}$ is asymptotically
equivalent to
the quadratic B-spline scheme $S_\bfa$.
Since $S_\bfa$ is stable and convergent, by Lemma \ref{LEM-AE},
the scheme $\{ S_k \}$ is also stable and convergent.
\end{proof}

To analyze the smoothness of a  scheme $\{ S_k \}$,
we use the so-called `Property A' introduced in \cite{DLY-14},
which is described in terms of the Laurent polynomial
\eqn{ \label{DELTA}
\DD^{j,k}_m(z) :=\sum_{\ell =0}^m
(-1)^\ell {m\choose \ell}z^\ell a^{j-\ell,k}(z),
}
where $a^{j,k}(z)$ is the symbol associated to the mask
$\{\bfa^{j,k}_{n}:n\in\ZZ\}$.

\begin{definition}
{\rm
A non-uniform subdivision scheme $\{ S_k \}$ is said to satisfy
the {\em Property A of order m} if
\eq{
\sum_{k = 0}^\infty
2^{k(m-\alpha)}
\Big|\frac{d^\alpha \DD^{j,k}_m}{dz^\alpha}(\pm 1)\Big| < \infty,\quad 0 \leq \alpha < m.
}
}
\end{definition}

A sufficient condition for the smoothness of a non-uniform subdivision scheme
is discussed in \cite{DLY-14}.

\begin{theorem} {\rm \cite{DLY-14}} \label{THM-A}
Suppose that  a non-uniform binary scheme $\{S_k\}$
is asymptotically equivalent to a stationary
scheme $S_a$.
If $\{S_k\}$ satisfies Property A of orders $1\le\ell\le m$ and
$S_a$ is a $C^m$ scheme with a stable basic limit function, then
$\{S_k\}$ is also $C^m$.
\end{theorem}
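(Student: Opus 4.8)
The plan is to argue by induction on $m$, the engine being the classical reduction of $C^m$-smoothness of a subdivision scheme to $C^0$-convergence of an associated $m$-th order difference (derived) scheme, combined with a nonstationary version of Lemma~\ref{LEM-AE}. The case $m=0$ is Lemma~\ref{LEM-AE}. Suppose the statement holds up to order $m-1$. Since Property A of orders $1\le\ell\le m$ subsumes orders $1\le\ell\le m-1$, and a $C^m$ stationary scheme is a fortiori $C^{m-1}$ with a stable basic limit function, the induction hypothesis already gives that $\{S_k\}$ is $C^{m-1}$; the whole task is to gain one more derivative.

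First I would build the derived schemes $\{S_k\}\rightsquigarrow\{S_k^{[1]}\}\rightsquigarrow\cdots\rightsquigarrow\{S_k^{[m]}\}$ one difference at a time, where $\{S_k^{[\ell]}\}$ is the (non-uniform, finitely supported) scheme obeyed by the scaled $\ell$-th order differences of the data, i.e.\ the natural order-$\ell$ generalisation of $\nabla$ and $\Delta$. The point of Property A is that, for each $\ell$, the Laurent polynomial $\DD^{j,k}_\ell(z)$ in \eqref{DELTA} is precisely the numerator of the symbol of $S_k^{[\ell]}$ after the factors responsible for differencing have been divided out; the conditions at $z=\pm1$ in Property A of order $\ell$ are exactly the algebraic requirements (approximate divisibility by $(1+z)^\ell$ of the masks $\bfa^{j,k}$, plus the correct parity-wise row sums) that make this extraction legitimate and keep $S_k^{[\ell]}$ a genuine binary subdivision operator with a finitely supported mask --- and the vanishing is needed only \emph{approximately}, which is why Property A states summability over $k$ rather than exact identities. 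Moreover, as $k\to\infty$ the masks $\bfa^{j,k}$ tend to the stationary mask of $S_a$, so $S_k^{[\ell]}$ tends to the $\ell$-th derived scheme $S_a^{[\ell]}$ of $S_a$; the summability built into Property A, with the weights $2^{k(\ell-\alpha)}$ calibrated against the $2^k$ factor picked up at each differencing, upgrades this to the asymptotic equivalence
$$\sum_{k\in\ZZP}\sup_{j\in\ZZ}\sum_{n\in\ZZ}\bigl|\,\bfa^{[\ell],j,k}_n-\bfa^{[\ell]}_n\,\bigr|<\infty .$$

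With the order-$m$ derived scheme available, the conclusion follows quickly. Because $S_a$ is $C^m$, the scheme $S_a^{[m]}$ obeyed by the scaled $m$-th differences under $S_a$ is convergent (its limit being the $m$-th derivative of the limit of $S_a$), and the hypothesis that $S_a$ has a \emph{stable} basic limit function gives the uniform bounds needed both for $S_a^{[m]}$ to be stable and for passing from convergence of the $m$-th differences back to $C^m$-convergence of the original scheme. Applying Lemma~\ref{LEM-AE} in its nonstationary form to the asymptotically equivalent pair $\{S_k^{[m]}\}$ and $S_a^{[m]}$ shows $\{S_k^{[m]}\}$ is stable and convergent; combined with the $C^{m-1}$ conclusion from the induction hypothesis, standard reconstruction arguments (the ``integrate up'' step, where stability of the basic limit function is again used) yield that $\{S_k\}$ is $C^m$.

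The step I expect to be the main obstacle is the second paragraph: establishing rigorously that Property A produces a well-defined, finitely supported derived scheme together with the stated asymptotic equivalence. Since $\DD^{j,k}_\ell(z)$ is divisible by the differencing factors only up to a nonzero remainder, one must carry that remainder through the $\ell$ successive differencings and verify that it is absorbed into the asymptotic-equivalence sum --- a delicate matching of the $2^k$ gained at each step against the weights $2^{k(\ell-\alpha)}$ in the definition of Property A of order $\ell$. A secondary, more bookkeeping-type difficulty is keeping the (dual) parametrization $\{t^k_j\}$ consistent in the definition of the order-$\ell$ differences, so that each $\{S_k^{[\ell]}\}$ is still a binary scheme to which the stationary smoothness theory and Lemma~\ref{LEM-AE} apply without modification.
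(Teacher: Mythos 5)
The paper offers no proof of this statement: Theorem \ref{THM-A} is quoted verbatim from \cite{DLY-14}, so there is nothing internal to compare against, and your argument has to be measured against the one actually given in that reference.

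Your sketch has a genuine gap at exactly the point you flag as ``the main obstacle,'' and it is not a technicality that can be carried along --- it is the entire content of the theorem. You propose to build derived schemes $\{S_k^{[\ell]}\}$ ``obeyed by the scaled $\ell$-th order differences of the data.'' Such a scheme exists only if each symbol $a^{j,k}(z)$ is \emph{exactly} divisible by $(1+z)^{\ell}$, i.e.\ the masks satisfy the sum rules exactly. For the schemes this theorem is designed to cover --- including the NUCC scheme of this very paper, whose masks in \eqref{MASK} satisfy $a^{j,k}(-1)=O(2^{-2k})$ but not $a^{j,k}(-1)=0$ --- no exact derived scheme exists at any order $\ell\ge1$: the differences $\Delta^{\ell}\ff^{k+1}$ are not a function of $\Delta^{\ell}\ff^{k}$ alone, so the object on which your induction and your application of Lemma \ref{LEM-AE} operate is not defined. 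The point of Property A in \cite{DLY-14} is precisely to bypass derived schemes: one works directly with the normalized difference sequences $2^{k\ell}\Delta^{\ell}\ff^k$, shows that their refinement is governed by the derived scheme of the \emph{stationary} scheme $S_a$ up to a perturbation whose size at level $k$ is controlled by the quantities $2^{k(\ell-\alpha)}\bigl|\frac{d^{\alpha}\DD^{j,k}_{\ell}}{dz^{\alpha}}(\pm1)\bigr|$, and then uses the summability in $k$ of these quantities, together with the stability of the basic limit function of $S_a$, to conclude that the perturbed difference process converges and that its limit is the $m$-th derivative of the limit of $\{S_k\}$. Carrying the non-divisible remainder ``through the $\ell$ successive differencings,'' which you explicitly defer, \emph{is} that perturbation estimate; without it the proposal reduces to a restatement of the theorem's conclusion. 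A correct write-up should either supply that estimate or drop the derived-scheme language altogether and argue on the difference sequences directly.
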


\begin{theorem}
The NUCC scheme $\{ S_k \}$ is $C^1$.
\end{theorem}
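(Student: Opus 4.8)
The plan is to invoke Theorem \ref{THM-A}: since we already know from Theorem \ref{THM-CONV} that the NUCC scheme $\{S_k\}$ is asymptotically equivalent to the stationary quadratic B-spline scheme $S_\bfa$, and since $S_\bfa$ (Chaikin's corner-cutting) is a $C^1$ scheme whose basic limit function (the quadratic B-spline) is stable, it suffices to verify that $\{S_k\}$ satisfies Property A of order $m=1$. That is, I must show
\eq{
\sum_{k=0}^\infty \Big| \DD^{j,k}_1(\pm 1) \Big| < \infty,
}
where, from \eqref{DELTA} with $m=1$, $\DD^{j,k}_1(z) = a^{j,k}(z) - z\, a^{j-1,k}(z)$.

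First I would evaluate $\DD^{j,k}_1$ at $z=-1$. Because the even and odd rules each satisfy the partition of unity (which is the content of \eqref{LAU-S2} — note $a^{j,k}(-1)=0$ holds not only for the alternative mask but, by the identity $\sinh(\frac14 u)/\sinh(u) + \sinh(\frac34 u)/\sinh(u) = (\sinh(\frac14 u)+\sinh(\frac34 u))/\sinh(u)$ evaluated appropriately, also for the main mask \eqref{MASK} after regrouping by parity), we get $a^{j,k}(-1) = 0$ for every $j,k$, hence $\DD^{j,k}_1(-1) = 0 - (-1)a^{j-1,k}(-1) = 0$ term-by-term. So the $z=-1$ series vanishes identically and only $z=+1$ needs work.

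For $z=+1$: from \eqref{LAU-S2} we have $a^{j,k}(1) = 2$ for all $j,k$, so $\DD^{j,k}_1(1) = a^{j,k}(1) - a^{j-1,k}(1) = 2 - 2 = 0$ as well — but this alone is not enough, because Property A of order $1$ only requires summability of $|\DD^{j,k}_1(\pm1)|$ with weight $2^{k(1-\alpha)} = 2^k$ for $\alpha = 0$; wait, re-reading the definition, for $m=1$ the only index is $\alpha = 0$ and the weight is $2^{k(m-\alpha)} = 2^k$. So I actually need $\sum_k 2^k |\DD^{j,k}_1(\pm1)| < \infty$, uniformly in $j$. Since both evaluations are exactly zero, this is trivially satisfied. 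However, I should double-check whether the intended reading is that the sum over $k$ of $2^{k(m-\alpha)}|d^\alpha \DD^{j,k}_m/dz^\alpha(\pm1)|$ must converge — and with $\DD^{j,k}_1(\pm1)\equiv 0$ it converges trivially to $0$. The substantive verification, then, is simply confirming the two algebraic identities $a^{j,k}(1)=2$ and $a^{j,k}(-1)=0$ for the mask \eqref{MASK} (grouping the four coefficients into even-indexed $\{\bfa^{j,k}_{-2},\bfa^{j,k}_0\}$ and odd-indexed $\{\bfa^{j,k}_{-1},\bfa^{j,k}_1\}$ and using the hyperbolic sine addition formula to see each pair sums to $1$), together with the corresponding identities for the alternative mask \eqref{MASK-20} which were already recorded in \eqref{LAU-S2}.

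The main obstacle is essentially bookkeeping rather than analysis: one must confirm that the \emph{partition-of-unity} property holds for the principal mask \eqref{MASK} and not merely the alternative one, i.e. that $\frac{\sinh(\frac14 u)}{\sinh u} + \frac{\sinh(\frac34 u)}{\sinh u}$ is what appears when the four entries of \eqref{MASK} are sorted by parity — here the even-subscript entries are $\bfa^{j,k}_{-2} = \sinh(\frac14\gamma_{2j}2^{-k})/\sinh(\gamma_{2j}2^{-k})$ and $\bfa^{j,k}_0 = \sinh(\frac34\gamma_{2j}2^{-k})/\sinh(\gamma_{2j}2^{-k})$, whose sum is $(\sinh\frac14 u + \sinh\frac34 u)/\sinh u$ with $u=\gamma_{2j}2^{-k}$; this is not obviously $1$, so I would use $\sinh\frac14 u + \sinh\frac34 u = 2\sinh\frac12 u\cosh\frac14 u$ and $\sinh u = 2\sinh\frac12 u\cosh\frac12 u$, giving the ratio $\cosh\frac14 u/\cosh\frac12 u \ne 1$ in general. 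This means $a^{j,k}(1)=2$ may fail for the main mask, in which case Property A of order $1$ must instead be verified through the estimate $|a^{j,k}(1)-2| \le c\,\gamma_{2j+\nu}^2 2^{-2k}$ (Taylor expansion of the hyperbolic cosines), so that $\sum_k 2^k|\DD^{j,k}_1(1)| \le \sum_k 2^k \cdot c\,2^{-2k} < \infty$ with the bound uniform in $j$ by Remark \ref{RMK-EP}; and the $z=-1$ evaluation remains exactly zero since the odd-subscript pair and the even-subscript pair enter $a^{j,k}(-1)$ with opposite signs but differing arguments $\gamma_{2j}$ versus $\gamma_{2j+1}$ — so here too one uses the $O(2^{-2k})$ estimate rather than exact cancellation. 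Thus the proof reduces to these two Taylor-type estimates followed by a direct application of Theorem \ref{THM-A} with $m=1$.
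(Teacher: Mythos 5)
Your final argument is correct and is essentially the paper's own proof: establish Property A of order $1$ via the Taylor-type bound $a^{j,k}(\pm 1)=a(\pm 1)+O(2^{-2k})$ (uniform in $j$ by Remark \ref{RMK-EP}), conclude $|\DD^{j,k}_1(\pm 1)|\le c\,2^{-2k}$ so the weighted series $\sum_k 2^k\cdot c\,2^{-2k}$ converges, and then apply Theorem \ref{THM-A} together with the asymptotic equivalence to the $C^1$, stable quadratic B-spline scheme. Your mid-proof self-correction is also right: the principal mask \eqref{MASK} does \emph{not} satisfy exact partition of unity (the even/odd pairs sum to $\cosh(u/4)/\cosh(u/2)$, not $1$), which is precisely why the paper reserves the exact identities \eqref{LAU-S2} for the alternative mask \eqref{MASK-20} and uses the $O(2^{-2k})$ estimate for \eqref{MASK}.
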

\begin{proof}
We first prove that
the NUCC  satisfies Property A of order $1$. To this end,
we need to estimate the Laurent
polynomial
$\DD^{j,k}_1(z) = a^{j,k}(z) - z a^{j-1,k}(z)$ at $z=-1,1$.
Let  $a(z)$ be the symbol of the quadratic B-spline.
It follows that
\eq{
|\DD^{j,k}_1(\pm 1)|
\leq |a^{j,k}(\pm 1)-a(\pm 1)|
+
|a(\pm 1) - (\pm 1) a^{j-1,k}(\pm 1)|.
}
where  $a(1) = 2$ and $a(-1) = 0$.
Here, to estimate the above terms, we need to consider the Laurent
polynomial $a^{j,k}(z)$ associated to both of the cases \eqref{MASK}
and \eqref{MASK-20} respectively.
It is easy to see that
$a^{j,k}(z)$ of \eqref{MASK}  satisfies
$$a^{j,k}(\pm 1) = a(\pm 1) +O(2^{-2k}).$$
Also, as observed in \eqref{LAU-S2},
the mask of the NUCC  of \eqref{MASK-20}  satisfies
the partition of unity
such that  $a^{j,k}(1) = 2$ and $a^{j,k}(-1) = 0$.
So, clearly, $a^{j,k}(\pm 1) = a(\pm 1)$.
Consequently, it is immediate that
$$|\DD^{j,k}_1(\pm 1)| \leq c2^{-2k}$$ for a constant $c>0$.
It verifies that  the NUCC scheme fulfills the Property A.
Moreover, 
as shown in the proof of Theorem \ref{THM-CONV},
the NUCC scheme $\{ S_k \}$ is asymptotically equivalent to
the quadratic B-spline subdivision $S_a$.
Since $S_a$ is $C^1$ and its basic limit functions is stable,
by Theorem \ref{THM-A},
$\{ S_k \}$ is also $C^1$. It completes the proof.
\end{proof}

\begin{remark}
{\rm
A bivariate subdivision scheme for modeling surfaces can be easily
constructed via the tensor product of the univariate schemes.
Hence, it clearly follows from \cite[Corollary 1]{CD-18}
that the tensor product
of univariate NUCC schemes is $C^1$.
}
\end{remark}

\section{Approximation order} \label{SEC-AO}

This section aims to show that the NUCC scheme
$\{ S_k \}$ achieves the approximation order $3$,
while the classical methods provide the second order accuracy.
Suppose that the initial data is of the form
$\ff^{0}=\{f(2^{-k_0}(n-\frac 12)):n\in\ZZ\}$ with $k_0\in\ZZ_+$.
We shall show that
$$\|\ff^\infty- f \|_\infty \leq c_f 2^{-3k_0}$$
where
$\ff^\infty= \lim_{k\to\infty}S_k \cdots S_0 \ff^{0}$.
This work especially focuses on approximating functions
in the Sobolev space
$$W^r_\infty(\RR) := \big \{ f: \RR \to \RR : \| f \|_{r,\infty} := \sum_{n = 0}^{r} \| f^{(n)} \|_\infty < \infty \big \}$$
for $r \in \ZZP.$
Since the proposed scheme is data-dependent,
the techniques commonly used for the proof in the uniform case
are not applicable to our case. Eventually, the technique for this proof is
more involved.
In order to facilitate our proof, throughout this section,
we use the following notation.
For a given $k \in \ZZ_+$,
denote by
\eqn{ \label{FH}
\ffh^{k} := \big \{ f(2^{-k_0} t^{k}_j) : j \in \ZZ \big \}
}
the sequence sampled from  a smooth function $f(2^{-k_0}\cdot)$
at the $k$th level grid points $t^k_j$.
In the following lemma,
we discuss the approximation properties of the subdivision operator  $S_k$
to the sequence of data $\ffh^{k}$.

\begin{lemma} \label{LEM-EST-S-0}
For a given function $f \in W^3_\infty(\RR)$, let $\ffh^{k}$
be the sequence of data of the form  \eqref{FH}.
If $\{ S_k \}$ is the NUCC scheme
acting on the initial data of the form
$\ff^{0}=\{f(2^{-k_0}(n-\frac 12)):n\in\ZZ\}$, then we have
\ba\label{LEM-10}
| S_{k} \ffh^{k}_{j} - \ffh^{k+1}_{j} | \leq c_f 2^{-2k_0-k},
\quad  j\in \ZZ,
\ea
with a constant $c_f>0$ depending on $f$.
\end{lemma}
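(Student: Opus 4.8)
The plan is to Taylor-expand both sides of \eqref{LEM-10} about the evaluation point $\tb := t^{k+1}_{2j+\nu}$ (with $\nu\in\{0,1\}$) and track the error. The key observation is that the NUCC mask at level $k$ was constructed so as to \emph{exactly} reproduce the two-dimensional space $\EE_2 = \mathrm{span}\{\exp(\gamma_{2j+\nu}(\cdot-\tb)),\exp(-\gamma_{2j+\nu}(\cdot-\tb))\}$, hence in particular it reproduces the constant function $1$ and the function $(\cdot-\tb)^2 + O(\gamma^2 (\cdot-\tb)^4)$ up to the curvature of $\cosh$. First I would write, by the reproduction identity \eqref{LS-1},
\[
S_k\ffh^k_j - \ffh^{k+1}_j \;=\; \bfa^{j,k}_0\bigl(g(t^k_j) - g(\tb)_{\text{proj}}\bigr) + \bfa^{j,k}_{-2}\bigl(g(t^k_{j+1}) - g(\tb)_{\text{proj}}\bigr),
\]
where $g := f(2^{-k_0}\cdot)$ and the ``proj'' term denotes the part of $g$ captured by $\EE_2$; since $\EE_2$ contains $1$ and (to leading order) a quadratic, the mask annihilates the degree-$0$, degree-$1$, and the relevant part of the degree-$2$ Taylor coefficients of $g$ at $\tb$. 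What survives is controlled by (i) the cubic Taylor remainder of $g$, which is $O(\|g^{(3)}\|_\infty (2^{-k})^3) = O(2^{-3k_0} 2^{-3k})$, and (ii) the mismatch between the true quadratic behavior of $g$ and the $\cosh$-shaped function in $\EE_2$, i.e.\ the discrepancy between $g''(\tb)$ and $\gamma_{2j+\nu}^2 g(\tb)$. By the choice \eqref{GAM-EVEN}, $\gamma_{2j+\nu}^2 (\ff^k_{j+\nu}+\epsilon) = \bd^k_{j+\nu}$, and since $\bd^k = S_a^k \Delta\ff^0$ is the classical-scheme image of the second differences of $f$, one has $\bd^k_{j+\nu} = 2^{-2k_0} g''(\tb) + O(\cdots)$ and $\ff^k_{j+\nu}+\epsilon \approx 2^{-2k_0} g(\tb)$ up to lower-order terms; hence $\gamma_{2j+\nu}^2 \approx g''(\tb)/g(\tb)$ as noted in the Remark, and the quadratic mismatch is itself $O(2^{-2k_0})$ times a bounded factor, multiplied by the $O(2^{-2k})$ size of $(t^k_{j\pm}-\tb)^2$ — but crucially the leading $2^{-2k}$ piece of the mismatch cancels because the mask weights are chosen to reproduce the $\cosh$, not a bare quadratic. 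Collecting terms gives the bound $c_f 2^{-2k_0 - k}$ on the right of \eqref{LEM-10}. (The extra factor $2^{-k}$ rather than $2^{-2k}$ comes from the fact that the first-order consistency is exact but the mask's second moment matches $\cosh$, leaving one net power of $2^{-k}$ from the curvature-matching residual together with the $2^{-2k_0}$ scaling.)

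For the bookkeeping I would proceed in the following order. First, expand $\bfa^{j,k}_0$ and $\bfa^{j,k}_{-2}$ from \eqref{L-0-1} in a Maclaurin series in $\gamma_{2j}2^{-k}$, writing them as $\tfrac34, \tfrac14$ plus corrections of size $O(\gamma^2 2^{-2k})$, and note $\bfa^{j,k}_0 + \bfa^{j,k}_{-2} = 1$ exactly (partition of unity) and $\tfrac34\cdot(-1) + \tfrac14\cdot(-2)\cdot\text{(offsets)}$ — i.e.\ the first moment condition holds exactly in the dual parametrization by symmetry of the construction about $\tb$. Second, Taylor-expand $g(t^k_j)$ and $g(t^k_{j+1})$ about $\tb$ to third order with integral remainder, using $t^k_j - \tb = -\tfrac34 2^{-k}$ and $t^k_{j+1} - \tb = \tfrac14 2^{-k}$ (for $\nu=0$; symmetric offsets for $\nu=1$). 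Third, substitute and use the exact reproduction of $1$ and the near-reproduction of the quadratic to cancel the zeroth, first, and bulk of the second-order terms, leaving the cubic remainder plus the curvature-mismatch residual. Fourth, invoke Remark~\ref{RMK-EP} and the stability of $S_a$ (from Theorem~\ref{THM-CONV}) to bound $\gamma_{2j+\nu}$ and $\bd^k_{j+\nu}$ uniformly, converting everything into the clean estimate $c_f 2^{-2k_0-k}$. The same computation applies verbatim to the alternative mask \eqref{MASK-20} with $\EE_2 = \mathrm{span}\{1, \exp(\gamma(\cdot-\tb))\}$, using \eqref{GAM-ODD} and the first-order difference $\nabla\ff^k_{j+1}$ in place of $\ff^k_{j+\nu}$.

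The main obstacle I anticipate is step three: making precise the claim that the mask reproduces \emph{enough} of the quadratic. Since $\EE_2$ is spanned by $e^{\pm\gamma(\cdot-\tb)}$, it contains $\cosh(\gamma(\cdot-\tb)) = 1 + \tfrac12\gamma^2(\cdot-\tb)^2 + O(\gamma^4(\cdot-\tb)^4)$, so reproducing $\cosh$ is not the same as reproducing $(\cdot-\tb)^2$; the gap is $O(\gamma^4 2^{-4k})$ inside the mask application, hence negligible, but one must also verify that writing $g = g(\tb)\cdot 1 + \tfrac12 g''(\tb)(\cdot-\tb)^2 + \dots$ and matching against $g(\tb)\cosh(\gamma(\cdot-\tb))$ forces $\gamma^2$ to equal $g''(\tb)/g(\tb)$ \emph{only approximately} — and the residual $|\gamma^2 g(\tb) - g''(\tb)|$ must be shown to be $O(2^{-2k_0})$, which is where the identity $\gamma^2_{2j+\nu} = \bd^k_{j+\nu}/(\ff^k_{j+\nu}+\epsilon)$ together with a uniform estimate on how well $\bd^k_{j+\nu}$ approximates $2^{-2k_0} g''(\tb)$ must be invoked. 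This last approximation, in turn, rests on the classical scheme $S_a$ approximating $f''$ to the appropriate order, which is standard but needs to be stated; I would isolate it as the crux and handle it with a short separate estimate before assembling the final bound.
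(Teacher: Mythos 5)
Your route is genuinely different from the paper's, and as written it has a gap. The paper proves this lemma by the much cruder decomposition
\[
S_k\ffh^k_j-\ffh^{k+1}_j=\big(S_k\ffh^k_j-S_\bfa\ffh^k_j\big)+\big(S_\bfa\ffh^k_j-\ffh^{k+1}_j\big),
\]
where $S_\bfa$ is the classical quadratic B-spline scheme: the second piece is $O(2^{-2(k_0+k)})$ by the standard second-order consistency of $S_\bfa$, and the first piece is bounded by $\|\bfa^{j,k}-\bfa\|_\infty\|\ffh^k\|_\infty$, which is controlled using only $\|\bd^k\|_\infty=\|S_\bfa^k\Delta\ff^0\|_\infty=O(2^{-2k_0})$ and the uniform boundedness of $\gamma_{2j+\nu}$ --- no Taylor matching against the exponential space is needed. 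Your approach, by contrast, is essentially the paper's proof of the \emph{later, sharper} estimate (Lemma~\ref{LEM-EST-S}, which gives $2^{-3k_0-2k}$), and that argument is only carried out there under the extra hypothesis $|f(t)|\geq\tau$ near $\tb$.

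The gap is precisely the step you flag as the crux. You need $|\gamma_{2j+\nu}^2\,g(\tb)-g''(\tb)|=O(2^{-2k_0})$ with $g=f(2^{-k_0}\cdot)$, i.e.\ that $\gamma_{2j+\nu}^2=\bd^k_{j+\nu}/(\ff^k_{j+\nu}+\epsilon)$ approximates $g''(\tb)/g(\tb)$. The present lemma carries no lower bound on $|f|$ and must hold for every $j$; at points where $f$ is small the denominator is governed by $\epsilon$ rather than by $g(\tb)$, the quotient bears no quantitative relation to $g''/g$, and the curvature residual is only $O(1)$ in general, which ruins the bound for $k$ small relative to $k_0$. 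Worse, even where $f$ is bounded away from zero, identifying $\ff^k_{j+\nu}$ (the actual subdivided data entering $\gamma$) with the sampled values $\ffh^k_{j+\nu}$ requires Lemma~\ref{LEM-EST-F}, which the paper deduces \emph{from} the present lemma, so your argument would be circular. A secondary symptom that the accounting is off: if the quadratic term really were cancelled up to an $O(2^{-2k_0})$ curvature residual, its contribution would be $O(2^{-2k_0-2k})$ (two powers of $2^{-k}$ from $(t-\tb)^2$), so your parenthetical explanation of where the single factor $2^{-k}$ comes from does not correspond to any term in the expansion; in the paper that single power arises from the crude product $\|\bfa^{j,k}-\bfa\|_\infty\|\ffh^k\|_\infty$ with no cancellation exploited. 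The fix is to abandon the exponential-reproduction argument here and compare directly with $S_\bfa$ as above, reserving your Taylor-matching argument for the refined local estimate where $|f|\geq\tau$ is assumed.
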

\begin{proof}
Letting $S_\bfa$ be the subdivision operator of the quadratic B-spline scheme
with the mask $\bfa$,
we can express
\eqn{ \label{EXPR-01}
\begin{aligned}
S_{k} \ffh^{k}_{j} - \ffh^{k+1}_{j}
=\big ( S_{k} \ffh^{k}_{j} - S_{\bfa} \ffh^{k}_{j}\big ) +
\big (S_{\bfa} \ffh^{k}_{j} - \ffh^{k+1}_{j}\big )
\end{aligned}
}
Since the quadratic B-spline $S_\bfa$
provides the second-order accuracy for smooth functions and
the data $\ffh^k$ has the density $2^{-k_0-k}$,
it holds clearly that
$|S_{\bfa} \ffh^{k}_{j} - \ffh^{k+1}_{j}| = O(2^{-2(k_0+k)})$.
Thus, it remains to estimate the term
$|S_{k} \ffh^{k}_{j} - S_{\bfa} \ffh^{k}_{j}|$.
From the proof of Theorem \ref{THM-CONV} and Remark \ref{RMK-EP},
we have
\begin{align*}
|S_{k} \ffh^{k}_{j} - S_{\bfa} \ffh^{k}_{j}|
 \leq \|\bfa^{j,k} - \bfa\|_\infty \|\ffh^k\|_\infty
\leq c 2^{-k} \| \bd^k\|_\infty \|\ffh^{k}\|_\infty
\end{align*}
with a constant $c > 0$.
By construction, $\bd^k = S^k_\bfa \Delta \ff^0$.
Since the initial data $\ff^0$ is sampled from the dilated 
function $f(2^{-k_0}\cdot)$,
it is clear that
$|\Delta \ff^0_n| \leq  |2^{-2 k_0} f''(2^{-k_0}(n-\frac 12))| 
+c'2^{-3k_0}\|f'''\|_\infty$
for some constant $c' > 0$.
It concludes
$|S_{k} \ffh^{k}_{j} - S_{\bfa} \ffh^{k}_{j}| \leq c_f 2^{-2k_0-k}$
with $c_f>0$ independent of $k$.
Hence, we obtain the lemma's claim.
\end{proof}

\begin{lemma} \label{LEM-EST-F}
Suppose that  the initial data
$\ff^{0} := \{f(2^{-k_0}(n - \frac 12)) : n \in \ZZ \}$ is sampled
from a function $f\in W^3_\infty(\RR)$, 
and let $\ffh^{k}$ be the sequence of data of the form  \eqref{FH}.
If $\ff^k = S_{k-1}\cdots S_{0} \ff^{0}$
with the NUCC scheme $\{S_k\}$, we obtain
\eq{
\| \ff^k - \ffh^{k} \|_\infty \leq c_f 2^{-2k_0},
}
where $c_f>0$ is a positive constant depending on $f$ but independent of
$k_0$.
\end{lemma}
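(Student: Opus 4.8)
The plan is to run a telescoping (perturbation) argument that exploits Lemma \ref{LEM-EST-S-0} level by level, together with the stability of the NUCC scheme established in Theorem \ref{THM-CONV}. First I would observe that, because the initial data is attached to the dual grid points $t^0_j = j-\tfrac12$, the two sequences coincide at level $0$: $\ff^0 = \ffh^0$, so there is no ``level-zero defect'' to carry along. Then, for each $\ell\in\ZZP$, since $\ff^{\ell+1}=S_\ell \ff^\ell$, I would write the one-step decomposition
\eqn{ \label{PROP-DEC}
\ff^{\ell+1}-\ffh^{\ell+1}
= S_\ell\big(\ff^\ell-\ffh^\ell\big) + \big(S_\ell \ffh^\ell-\ffh^{\ell+1}\big),
}
where $S_\ell$ denotes the (linear, mask-frozen) subdivision operator whose mask is the one determined by the true iterate $\ff^\ell$. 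The second term in \eqref{PROP-DEC} is exactly the quantity controlled by Lemma \ref{LEM-EST-S-0}, namely $\|S_\ell \ffh^\ell-\ffh^{\ell+1}\|_\infty \le c_f 2^{-2k_0-\ell}$.

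Next I would unwind \eqref{PROP-DEC} recursively from $\ell = 0$ up to $\ell = k-1$, using $\ff^0-\ffh^0 = 0$, to obtain the telescoping representation
\eqn{ \label{PROP-TELE}
\ff^{k}-\ffh^{k}
= \sum_{\ell=0}^{k-1} \big(S_{k-1}\cdots S_{\ell+1}\big)\,\big(S_\ell \ffh^\ell-\ffh^{\ell+1}\big),
}
with the convention that the composite operator $S_{k-1}\cdots S_{\ell+1}$ is the identity when $\ell = k-1$. Taking $\ell^\infty$-norms in \eqref{PROP-TELE} and invoking the stability from Theorem \ref{THM-CONV} — which furnishes a constant $C>0$ with $\|S_{k-1}\cdots S_{\ell+1}\|_\infty \le C$ uniformly in $k$ and $\ell$ — together with Lemma \ref{LEM-EST-S-0}, I would get
\eq{
\|\ff^{k}-\ffh^{k}\|_\infty
\le C\sum_{\ell=0}^{k-1} \|S_\ell \ffh^\ell-\ffh^{\ell+1}\|_\infty
\le C c_f 2^{-2k_0}\sum_{\ell=0}^{\infty} 2^{-\ell}
= 2C c_f\, 2^{-2k_0},
}
which is the claimed bound with a constant $2Cc_f$ independent of both $k$ and $k_0$.

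The only delicate points are bookkeeping ones. One must be sure that the constant $c_f$ in Lemma \ref{LEM-EST-S-0} is the same at every level (it is, since the lemma is uniform in $k$ once $f$ and $k_0$ are fixed, and $\gamma_{2j+\nu}$ is bounded uniformly as in Remark \ref{RMK-EP}), and that the operators $S_\ell$ appearing in \eqref{PROP-TELE} are precisely the mask-frozen linear operators whose uniform norm bound is the content of the stability statement — i.e.\ stability must be used exactly in the form $\sup_{k,n}\|S_{k+n}\cdots S_k\|_\infty\le C$. The main obstacle is therefore not the telescoping itself, which is routine, but the uniform boundedness of the composite operators $S_{k-1}\cdots S_{\ell+1}$; this is exactly what Theorem \ref{THM-CONV} (stability via asymptotic equivalence to the convergent, stable quadratic B-spline scheme) is invoked to supply, and without it the naive bound $\prod\|S_\ell\|_\infty$ would not obviously converge.
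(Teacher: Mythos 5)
Your proposal is correct and follows essentially the same route as the paper: the paper's proof is exactly this telescoping decomposition $\ff^k - \ffh^k = \sum_{\ell=0}^{k-1}(S_{k-1}\cdots S_{\ell+1})(S_\ell\ffh^\ell - \ffh^{\ell+1})$ (using $\ff^0=\ffh^0$), bounded via the stability constant from Theorem \ref{THM-CONV} and the per-level estimate of Lemma \ref{LEM-EST-S-0}, summed as a geometric series. Your explicit remarks on the level-zero coincidence and on freezing the data-dependent masks to make each $S_\ell$ linear are sound and merely make explicit what the paper leaves implicit.
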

\begin{proof}
We can represent $\ff^k$ as the following telescoping sum:
\eq{
\ff^k = S_{k-1} \ffh^{k-1} + \sum_{\ell = 1}^{k-1} S_{k-1}\cdots S_{k-\ell}
(S_{k-\ell-1} \ffh^{k-\ell-1} - \ffh^{k-\ell}).
}
According to Theorem \ref{THM-CONV}, the NUCC scheme $\{ S_k \}$ is stable
such that
$\| S_{k-1}\cdots S_{k-\ell} \|_\infty \leq c$ for any $k$ and $\ell$
with $k \geq \ell$.
It induces the relation
\eq{
| \ff^k_j - \ffh^{k}_j |
&\leq
\| S_{k-1} \ffh^{k-1} - \ffh^k \|_\infty +
\sum_{\ell = 1}^{k-1}
\| S_{k-1}\cdots S_{k-\ell} \|_\infty
\| S_{k-\ell-1} \ffh^{k-\ell-1} - \ffh^{k-\ell} \|_\infty  \\
&\leq
c  \sum_{\ell = 0}^{k-1} \| S_{\ell} \ffh^{\ell} - \ffh^{\ell+1} \|_\infty.
}
Further, by Lemma \ref{LEM-EST-S-0},
$\| S_{\ell} \ffh^{\ell} - \ffh^{\ell+1} \|_\infty \leq
c_f 2^{-2k_0-\ell}$. Hence, it leads to  the bound
\eq{
\sum_{\ell = 0}^{k-1} \| S_{\ell} \ffh^{\ell} - \ffh^{\ell+1} \|_\infty
\leq c_f \sum_{\ell = 0}^{k-1} 2^{-2 k_0-\ell}
\leq c_f \sum_{\ell = 0}^{\infty} 2^{-2 k_0-\ell}
\leq c'_f 2^{-2k_0},
}
which finishes the proof.
\end{proof}

Based on the above results, we shall investigate the approximation order
of the NUCC scheme.  For better readability of this paper,
our proof will be done by focusing on the mask in \eqref{MASK};
the other case in \eqref{MASK-20} can be done similarly.
In advance to proceed further,  we introduce some notation.
For each $j\in\ZZ$ and $\nu=0,1$,
we use the abbreviation
$$ j_\nu := 2j+ \nu.$$
Denote by
\eqn{ \label{FH-D2}
\ffdh^{k} := \big \{ f''(2^{-k_0} t^{k}_j) : j \in \ZZ \big \}
}
the sequence sampled from  a smooth function $f''(2^{-k_0}\cdot)$
at the $k$th level grid points $t^k_j$.
Let $\tb =2^{-k_0} t^{k+1}_{j_\nu}$ with $\nu=0,1$
be the evaluation points
at level $k+1$ between $2^{-k_0}t^k_j$ and $2^{-k_0}t^k_{j+1}$.
Then, for a given smooth function  $f$,
consider an auxiliary function $Q_\nu f$
defined by
\eqn{ \label{QF}
Q_\nu f (t) = \LL_{0,\nu}(t) f(2^{-k_0}t^k_j) + \LL_{1,\nu}(t) f(2^{-k_0}t^k_{j+1})
}
where $\LL_{0,\nu}$ and $\LL_{1,\nu}$ are the
Lagrange functions as given in \eqref{LS-1}
in the exponential space
\ba\label{E-SP}
\EE_2 := {\rm span} \{ \exp(\gammah_{j_\nu} (\cdot - \tb)),  \
\exp(-\gammah_{j_\nu} (\cdot - \tb)) \}
\ea
satisfying
$\LL_{n,\nu} (t^k_{j+\ell})= \delta_{\ell, n}$ for $\ell, n=0, 1$.
Here, we suppose that the shape parameter is chosen as
\eqn{ \label{GAM-H}
\gammah_{j_\nu} := \gammah_{j_\nu,k} :=
\sqrt{ \frac{\ffdh_{j+\nu}^k}{\ffh^k_{j+\nu}}}.
}
where $\ffdh^k_{j+\nu} := f''(2^{-k_0}t^k_{j+\nu})$.
It is obvious that
\ba\label{DFT}
\ffdh^k_{j+\nu} = f''(\tb)  + O(2^{-k_0-k}).
\ea
Note that the shape parameter $\gamma_{j_\nu}$ in the mask of
the NUCC scheme (see \eqref{GAM-EVEN})
is an approximation to $\gammah_{j_\nu}$.
The only difference is that the shape parameter  \eqref{GAM-H}
is given in terms of  the original function values.
Further, in view of Section \ref{SEC-CONST}, it is  obvious that
the operator $Q_\nu$ locally reproduces  the exponential polynomials
in the space $\EE_2$.

\def\ep{\epsilon}
\begin{lemma} \label{LEM-EST-Q}
For each $\nu=0,1$, let  $Q_\nu f$ be defined as  in \eqref{QF}
with $f \in W^3_\infty(\RR)$.
Let  $\tb :=2^{-k_0} t^{k+1}_{j_\nu}$ and
assume that
$|f(t)|\geq \tau $ for $|t-\tb|\leq 2^{-k_0-k}$ with a constant  $\tau>0$.
Then, we have the estimate
\eqn{ \label{QF-F}
| Q_\nu f(2^{-k_0}t^{k+1}_{j_\nu}) - f(2^{-k_0}t^{k+1}_{j_\nu}) |
\leq c_{f,\tau} 2^{-3(k_0+k)}
}
for some constant $c_{f,\tau} > 0$ depending on $f$ and $\tau$.
\end{lemma}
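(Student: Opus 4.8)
The plan is to recognize $Q_\nu f$ as the interpolant to $f$ from $\EE_2$ at the two nodes $x_0 := 2^{-k_0}t^k_j$ and $x_1 := 2^{-k_0}t^k_{j+1}$, and to exploit that $\EE_2$ is the kernel of $D^2-\gammah_{j_\nu}^2$ together with the particular choice \eqref{GAM-H}. Write $h := 2^{-k_0-k}$, so that $x_1-x_0 = h$ and $\tb\in[x_0,x_1]$ with $|\tb-x_0|,|\tb-x_1|\le h$, and set $x_\ast := 2^{-k_0}t^k_{j+\nu}$, one of the two nodes; by \eqref{GAM-H}, $\gammah_{j_\nu}^2 = \ffdh^k_{j+\nu}/\ffh^k_{j+\nu} = f''(x_\ast)/f(x_\ast)$. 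Since $|f|\ge\tau$ on $[x_0,x_1]\subset[\tb-h,\tb+h]$ by hypothesis, we get $|\gammah_{j_\nu}^2|\le \|f''\|_\infty/\tau$; thus $\gammah_{j_\nu}$ ranges over a set bounded independently of $j,k,\nu$, and (for $k_0$ large enough that $|\gammah_{j_\nu}|h$ stays away from $\pi$) the nodes $\{x_0,x_1\}$ are unisolvent for $\EE_2$ and the Lagrange functions satisfy $|\LL_{n,\nu}(\tb)|\le 2$, as is seen from the explicit formulas \eqref{L-0-1}--\eqref{L-0-2} with $2^{-k}$ replaced by $h$.

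Next I would introduce the comparison function $e\in\EE_2$ defined as the Hermite interpolant of $f$ at $x_\ast$, i.e.\ $e(x_\ast)=f(x_\ast)$ and $e'(x_\ast)=f'(x_\ast)$ (solvable since the $2\times2$ matrix $\begin{bmatrix} 1 & 1\\ \gammah_{j_\nu} & -\gammah_{j_\nu}\end{bmatrix}$-type system is nonsingular, and reducing to the linear Taylor polynomial when $\gammah_{j_\nu}=0$). The crucial point is that, because $e$ satisfies $e''=\gammah_{j_\nu}^2 e$, the choice of $\gammah_{j_\nu}$ forces
\[
e''(x_\ast) = \gammah_{j_\nu}^2\, e(x_\ast) = \frac{f''(x_\ast)}{f(x_\ast)}\,f(x_\ast) = f''(x_\ast),
\]
so $g:=f-e$ vanishes together with $g'$ and $g''$ at $x_\ast$. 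From $g'''=f'''-\gammah_{j_\nu}^2 e'$ and a standard Gronwall estimate on the interval $[x_0,x_1]$ (length $\le 1$), which gives $\|e'\|_{L^\infty[x_0,x_1]}\le C(\|f\|_\infty+\|f'\|_\infty)$ with $C$ depending only on the bounded quantity $\gammah_{j_\nu}$, one obtains $\|g'''\|_{L^\infty[x_0,x_1]}\le c_{f,\tau}$. Taylor's formula at $x_\ast$ then yields $|g(s)|\le \tfrac16|s-x_\ast|^3\,\|g'''\|_{L^\infty[x_0,x_1]}\le \tfrac16 h^3 c_{f,\tau}$ for every $s\in[x_0,x_1]$, in particular for $s\in\{x_0,x_1,\tb\}$.

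Finally, since $Q_\nu$ reproduces $\EE_2$ we have $Q_\nu f = e + Q_\nu g$, hence $Q_\nu f(\tb)-f(\tb) = Q_\nu g(\tb)-g(\tb)$, and therefore
\[
|Q_\nu f(\tb)-f(\tb)| \le |g(\tb)| + |\LL_{0,\nu}(\tb)|\,|g(x_0)| + |\LL_{1,\nu}(\tb)|\,|g(x_1)| \le (1+2+2)\cdot\tfrac16\, h^3 c_{f,\tau},
\]
which is the asserted bound with $2^{-3(k_0+k)}=h^3$. The conceptual heart — and the only step that is not a routine Taylor/Gronwall estimate — is the identity $e''(x_\ast)=f''(x_\ast)$: it is exactly the prescription \eqref{GAM-H} for the shape parameter that upgrades the generic second-order interpolation error to third order. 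The places where one must use $|f|\ge\tau$ and take $k_0$ large are the uniform boundedness of $\gammah_{j_\nu}$, the unisolvence of $\{x_0,x_1\}$ for $\EE_2$, and the bound $|\LL_{n,\nu}(\tb)|\le 2$, precisely as anticipated by Remark \ref{RMK-EP}.
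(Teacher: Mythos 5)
Your argument is correct and follows the same overall strategy as the paper's proof: construct an auxiliary element of $\EE_2$ that osculates $f$, use the reproduction of $\EE_2$ by $Q_\nu$ to reduce the error to values of the residual at the two nodes and at $\tb$, and bound the Lagrange functions. The one genuine difference is \emph{where} you osculate. The paper takes $G_f\in\EE_2$ with $G_f(\tb)=f(\tb)$ and $G_f'(\tb)=f'(\tb)$; then $G_f''(\tb)=(\ffdh^k_{j+\nu}/\ffh^k_{j+\nu})\,f(\tb)$ only \emph{approximates} $f''(\tb)$, and the paper recovers third order by pairing the $O(2^{-k_0-k})$ mismatch in the quadratic Taylor coefficient with the factor $(t-\tb)^2=O(2^{-2(k_0+k)})$. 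You instead Hermite-interpolate at the node $x_\ast=2^{-k_0}t^k_{j+\nu}$, where the prescription \eqref{GAM-H} makes $e''(x_\ast)=\gammah_{j_\nu}^2 f(x_\ast)=f''(x_\ast)$ hold \emph{exactly}, so $f-e$ vanishes to third order at $x_\ast$ and the whole estimate collapses to a single cubic Taylor remainder. This is slightly cleaner and isolates the role of \eqref{GAM-H} more sharply; it costs nothing, since $|x_\ast-\tb|=2^{-k_0-k-2}$ keeps $x_\ast$ inside the region where $|f|\geq\tau$, so $|\gammah_{j_\nu}|^2\leq\|f''\|_\infty/\tau$ just as in the paper. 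You are also more explicit than the paper about the uniform bound on $|\LL_{n,\nu}(\tb)|$, which both proofs need and which requires $|\gammah_{j_\nu}|2^{-k_0-k}$ to stay away from $\pi$ when $\gammah_{j_\nu}$ is purely imaginary --- a point the paper's proof passes over silently.
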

\begin{proof}
Noting that  the space $\EE_2$ in \eqref{E-SP}
can be written as
$\EE_2={\rm span}
       \{\cosh(\gammah_{j_\nu}(\cdot -\tb) ),\ \sinh(\gammah_{j_\nu}(\cdot -\tb) )\}$,
we define a function $G_f$ by
\eqn{ \label{G}
G_f = f(\tb) \cosh (\gammah_{j_\nu} (\cdot - \tb))
+ f'(\tb)\frac{\sinh (\gammah_{j_\nu} (\cdot - \tb))}{\gammah_{j_\nu}}.
}
Clearly, $G_f$ is in the space $\EE_2$ and
approximates $f$ in the sense that
$G_f(\tb) = f(\tb)$ and  $G_f'(\tb) = f'(\tb)$.
Moreover,
$$G_f''(\tb) = \gammah^2_{j_\nu} f(\tb) = \frac{\ffdh^k_{j+\nu}}{\ffh^k_{j+\nu}} f(\tb).$$
Then, taking the Taylor expansion of $f-G_f$ around $\tb$,
it follows that
\eqn{ \label{F-G-0}
f(t) - G_f (t) =
\Big(f''(\tb) - \frac{\ffdh^k_{j+\nu}}{\ffh^k_{j+\nu}} f(\tb) \Big)
 \frac{(t-\tb)^2}{2} +
(f-G_f)^{(3)}(\xi)\frac{(t-\tb)^3}{6}.
}
By \eqref{DFT},
$\ffdh^k_{j+\nu} = f''(\tb) + O(2^{-k_0-k})$.
Also,
$\ffh^k_{j+\nu} = f(\tb) + O(2^{-k_0-k})$.
Based on the definition of $\gammah_{j_\nu}$,
some elementary calculation reveals that
$|G_f^{(3)}(\xi)|\leq  c_f$ for some $c_f>0$.
Since $|f(t)|\geq \tau $ for $|t-\tb|\leq 2^{-k_0-k}$,
the identity \eqref{F-G-0} leads to the following bound:
\eqn{ \label{FG-111}
|f(t) - G_f(t)|
\leq c_{f,\tau} 2^{-3(k+k_0)}
}
with a constant $c_{f,\tau}>0$ depending on $f$ and $\tau$.
Now, since $G_f$ is in the space $\EE_2$,
due to the exponential reproducing property of $\LL_{0,\nu}$
and $\LL_{0,\nu}$, $G_f$ is represented as
$$G_f(\tb) = \LL_{0,\nu}(\tb) G_f(2^{-k_0}t^k_j)
            + \LL_{1,\nu}(\tb) G_f(2^{-k_0}t^k_{j+1}).$$
It leads to the expression
\eqn{ \label{QF-F-1}
\begin{aligned}
Q_\nu f(\tb) - f(\tb) &= Q_\nu f(\tb) - G_f(\tb)  \\
&= \LL_{0,\nu}(\tb) (f(2^{-k_0}t^k_j) - G_f(2^{-k_0}t^k_j)) + \LL_{1,\nu}(\tb) (f(2^{-k_0}t^k_{j+1}) - G_f(2^{-k_0}t^k_{j+1})).
\end{aligned}
}
Applying the estimate \eqref{FG-111} to \eqref{QF-F-1}
induces the required result \eqref{QF-F}.
\end{proof}

Using the above results,
we will  show that the NUCC scheme $\{ S_k \}$ can provide
an improved order of accuracy better than the classical methods.

\begin{lemma} \label{LEM-EST-S}
Let $\{ S_k \}$ be the NUCC scheme
acting on the initial data
$\ff^0 =  \{f(2^{-k_0}(n-\frac 12):n\in\ZZ\}$
with a function  $f\in W^3_\infty(\RR)$.
Let $\ffh^{k}=\{f(2^{-k_0}t^{k}_j):j\in\ZZ\}$  as given in \eqref{FH} and
$\tb :=2^{-k_0} t^{k+1}_{j_\nu}$ for $\nu=0,1$.
If $|f(t) |\geq \tau $ for $|t -\tb|\leq 2^{-k_0-k}$
with a  constant $\tau>0$,
then we get
\eqn{ \label{EST-1}
| S_k \ffh^{k}_{j_\nu} - \ffh^{k+1}_{j_\nu} | \leq c_{f,\tau} 2^{-(3k_0+2k)},
}
where $c_{f,\tau} > 0$ is a constant dependent upon $f$  and $\tau$.
\end{lemma}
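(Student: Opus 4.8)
The plan is to measure $S_k\ffh^k_{j_\nu}$ against the exponential interpolant $Q_\nu f$ of \eqref{QF} evaluated at $\tb:=2^{-k_0}t^{k+1}_{j_\nu}$, and then to invoke Lemma~\ref{LEM-EST-Q}. Since $\ffh^{k+1}_{j_\nu}=f(\tb)$, I would first split
$$ S_k\ffh^k_{j_\nu}-\ffh^{k+1}_{j_\nu}=\bigl(S_k\ffh^k_{j_\nu}-Q_\nu f(\tb)\bigr)+\bigl(Q_\nu f(\tb)-f(\tb)\bigr), $$
and dispose of the second bracket at once by Lemma~\ref{LEM-EST-Q} — whose hypothesis $|f|\ge\tau$ on $\{|t-\tb|\le 2^{-k_0-k}\}$ is precisely the present one — so that term is $\le c_{f,\tau}2^{-3(k_0+k)}\le c_{f,\tau}2^{-(3k_0+2k)}$. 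All the work is in the first bracket.

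For the first bracket, the key observation (working, as announced, with the mask \eqref{MASK}) is that both $S_k\ffh^k_{j_\nu}$ and $Q_\nu f(\tb)$ are two-point combinations of $\ffh^k_j$ and $\ffh^k_{j+1}$ built from the \emph{same} coefficient functions: by \eqref{RR}, \eqref{L-0-1}--\eqref{L-0-2} and the analogous formulas for the Lagrange functions $\LL_{n,\nu}$ of \eqref{QF}, the coefficient of $\ffh^k_j$ equals $g(\gamma_{j_\nu}2^{-k})$ for $S_k$ and $g(\gammah_{j_\nu}2^{-k_0-k})$ for $Q_\nu$, and that of $\ffh^k_{j+1}$ equals $\tilde g(\gamma_{j_\nu}2^{-k})$ resp. $\tilde g(\gammah_{j_\nu}2^{-k_0-k})$, where $\{g,\tilde g\}=\{\sinh(3u/4)/\sinh u,\ \sinh(u/4)/\sinh u\}$ (which is which depends only on the parity $\nu$); indeed in both cases the two nodes lie at distance ratio $1:3$ from the evaluation point, and only the product (shape parameter)$\,\times\,$(step size) differs. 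Therefore
$$ S_k\ffh^k_{j_\nu}-Q_\nu f(\tb)=\bigl(g(\gamma_{j_\nu}2^{-k})-g(\gammah_{j_\nu}2^{-k_0-k})\bigr)\ffh^k_j+\bigl(\tilde g(\gamma_{j_\nu}2^{-k})-\tilde g(\gammah_{j_\nu}2^{-k_0-k})\bigr)\ffh^k_{j+1}. $$
Now $g$ and $\tilde g$ are \emph{even} and analytic near $0$, hence smooth functions of $u^2$; as the shape parameters are uniformly bounded (Remark~\ref{RMK-EP}) and $2^{-k}\le 1$, this gives $|g(u)-g(v)|\le C|u^2-v^2|$, so
$$ \bigl|g(\gamma_{j_\nu}2^{-k})-g(\gammah_{j_\nu}2^{-k_0-k})\bigr|\le C\,2^{-2k}\,\bigl|\gamma_{j_\nu}^2-2^{-2k_0}\gammah_{j_\nu}^2\bigr| $$
and the same for $\tilde g$. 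It is the absence of a linear term (evenness) that produces the gain from second to third order. Since $\|\ffh^k\|_\infty\le\|f\|_\infty$, the first bracket is bounded by $C\,2^{-2k}\,\bigl|\gamma_{j_\nu}^2-2^{-2k_0}\gammah_{j_\nu}^2\bigr|\,\|f\|_\infty$.

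It thus remains to prove $\bigl|\gamma_{j_\nu}^2-2^{-2k_0}\gammah_{j_\nu}^2\bigr|\le c_{f,\tau}2^{-3k_0}$, where $\gamma_{j_\nu}^2=\bd^k_{j+\nu}/(\ff^k_{j+\nu}+\epsilon)$ by \eqref{GAM-EVEN} and $\gammah_{j_\nu}^2=\ffdh^k_{j+\nu}/\ffh^k_{j+\nu}$ by \eqref{GAM-H}. I would assemble this from: (a) a Taylor expansion of the divided second difference (valid for $f\in W^3_\infty(\RR)$), $\Delta\ff^0=2^{-2k_0}\ffdh^0+O(2^{-3k_0}\|f'''\|_\infty)$ entrywise, which together with $\bd^k=S_\bfa^k\Delta\ff^0$, the sharp norm identity $\|S_\bfa^k\|_\infty=1$ (the quadratic B-spline mask is nonnegative and a partition of unity), and reproduction of linear polynomials by $S_\bfa$, gives $\bd^k_{j+\nu}=2^{-2k_0}\ffdh^k_{j+\nu}+O(2^{-3k_0})$; (b) $|\ff^k_{j+\nu}-\ffh^k_{j+\nu}|\le c_f2^{-2k_0}$ from Lemma~\ref{LEM-EST-F}, and $|\epsilon|=2^{-2k_0}$ with $\epsilon$ of the same sign as the data near $\tb$; (c) the fact that $2^{-k_0}t^k_{j+\nu}$ lies in $\{|t-\tb|\le 2^{-k_0-k}\}$ (a short dual-grid computation), so $|f|\ge\tau$ there forces $|\ffh^k_{j+\nu}|\ge\tau$ and, for $k_0$ large, $|\ff^k_{j+\nu}+\epsilon|\ge\tau/2$, keeping both denominators bounded away from $0$. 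Writing $\gamma_{j_\nu}^2-2^{-2k_0}\gammah_{j_\nu}^2$ over the common denominator $(\ff^k_{j+\nu}+\epsilon)\ffh^k_{j+\nu}$ and expanding the numerator as $(\bd^k_{j+\nu}-2^{-2k_0}\ffdh^k_{j+\nu})\ffh^k_{j+\nu}+2^{-2k_0}\ffdh^k_{j+\nu}(\ffh^k_{j+\nu}-\ff^k_{j+\nu})-2^{-2k_0}\ffdh^k_{j+\nu}\epsilon$, facts (a)--(c) give $O(2^{-3k_0})$. Plugging this back, the first bracket is $\le c_{f,\tau}2^{-(3k_0+2k)}$, and adding the second bracket yields \eqref{EST-1}.

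I expect the main obstacle to be fact (a): one has to be sure the $O(2^{-3k_0})$ error coming from the Taylor remainder of the divided second difference is not amplified by the $k$ iterations of $S_\bfa$ — which is exactly why the identity $\|S_\bfa^k\|_\infty=1$ is used rather than a mere uniform bound — and to note that only first-order (Lipschitz) accuracy of $S_\bfa$ on the sampled data is available, $f$ being only in $W^3_\infty(\RR)$; this last point is harmless because that contribution is premultiplied by $2^{-2k_0}$.
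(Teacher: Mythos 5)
Your proposal is correct and follows essentially the same route as the paper's proof: the same splitting through the exponential interpolant $Q_\nu f$ and Lemma \ref{LEM-EST-Q}, the same reduction (via the evenness of $u\mapsto\sinh(\alpha u)/\sinh(u)$, which the paper calls the Maclaurin expansion argument) to the difference of squared shape parameters, and the same three ingredients — $\bd^k=S_\bfa^k\Delta\ff^0=2^{-2k_0}\ffdh^k+O(2^{-3k_0})$, Lemma \ref{LEM-EST-F}, and the lower bound $|f|\ge\tau$ — to bound $\bigl|\gamma_{j_\nu}^2-2^{-2k_0}\gammah_{j_\nu}^2\bigr|$ by $c_{f,\tau}2^{-3k_0}$. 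Your treatment is in fact slightly more scrupulous than the paper's (explicit handling of the $\epsilon$ in the denominator of \eqref{GAM-EVEN} and of the loss of one order in applying $S_\bfa^k$ to the second differences when $f\in W^3_\infty$ only), but these are refinements of the identical argument.
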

\begin{proof}
Using the auxiliary function $Q_\nu f$ in \eqref{QF},
let us write
\eqn{ \label{REL-1}
| S_k \ffh^{k}_{j_\nu} - \ffh^{k+1}_{j_\nu} |
&\leq | S_k \ffh^{k}_{j_\nu} - Q_\nu f(\tb) |
       + | Q_\nu f(\tb) - \ffh^{k+1}_{j_\nu} |.
}
Then we first consider the case $j_\nu = 2j$, i.e., $\nu=0$.
To investigate the first term in the right-hand side of this inequality,
we see that
\eqn{ \label{REL-2}
S_k \ffh^{k}_{2j} - Q_0f(\tb) =
\big ( \bfa^{i,k}_0 - \LL_{0, 0}(\tb) \big) \ffh^k_j
+ \big( \bfa^{i,k}_{-2} - \LL_{1,0}(\tb) \big) \ffh^k_{j+1}
}
with the Lagrange functions $\LL_{n,0}$ in \eqref{QF}.
Recall that the mask $\bfa^{i,k}$  of the NUCC scheme and
$\LL_{0,0}(\tb)$ are
written in terms of the function
$\frac{\sinh (\frac 34\gamma_t)}{\sinh \gamma t}$
with a suitable $\gamma$ respectively.
By using the Maclaurin series expansion argument,
we have
\begin{align} \label{E-0}
\begin{split}
\big| \big ( \bfa^{j,k}_0 - \LL_{0,0}(\tb) \big) \ffh^k_j \big|
&\leq
c_1 \Big  | \Big( \frac{2^{-2k}\bd^k_{j}}{\ff^k_j}
- \frac{2^{-2(k_0+k)} \ffdh^k_j  }{ \ffh^k_{j} } \Big) \ffh^k_j \Big |
+ c_2 2^{-4(k_0+k)} \\
& =
c_1 2^{-2k} \Big  |\frac{\bd^k_{j}}{\ff^k_j} \ffh^k_j
- 2^{-2k_0} \ffdh^k_j \Big |  + c_2 2^{-4(k_0+k)},
\end{split}
\end{align}
for some constants $c_1, c_2 > 0$.
Here, $\bd^k = S^k_{\bfa}  \Delta \ff^0$ and
the data $\ff^0$ has the density $2^{-k_0}$. So,
obviously,
$$S^k_{\bfa} \Delta \ff^0_{j} = 2^{-2k_0} f''(2^{-k_0} t^{k+1}_{2j})
+ O(2^{-3k_0}).$$
Also, by \eqref{DFT},
$ \ffdh^k_j   =f''(2^{-k_0}t^{k+1}_{2j})+O(2^{-k_0})$.
Moreover, by Lemma \ref{LEM-EST-F},
$\ff^k_{j} = \ffh^{k}_{j} + O(2^{-2k_0})$.
Thus
a direct computation from \eqref{E-0} yields
\eq{
\big| \big ( \bfa^{i,k}_0 - \LL_{0,0}(\tb) \big) \ffh^k_j \big|
\leq c_{f,\tau} 2^{-3k_0 - 2k}
}
for a constant $c_{f,\tau} > 0$ depending on $f$ and $\tau$.
In a similar way, we can get
$\big| \big ( \bfa^{i,k}_{-2} - \LL_{1,0}(\tb) \big) \ffh^k_j \big|
\leq c_{f,\tau} 2^{-3k_0 - 2k}$
which estimates
the second term in the right-hand side of \eqref{REL-2}.
It concludes that
\eqn{ \label{BOUND-1}
| S_k \ffh^{k}_{j} - Q_0f(\tb)  | \leq c 2^{-3k_0 -2k}.
}
Due to  Lemma \ref{LEM-EST-Q},  the last term in \eqref{REL-1}
satisfies
$| Q_0f(\tb) - \ffh^{k+1}_{2j} | \leq c_{f,\tau} 2^{-3(k_0+k)}$.
This together with the bound \eqref{BOUND-1} implies the estimate \eqref{EST-1}.
The proof for the case $j_\nu = 2j+1$ can be done similarly.
So,  we finish the proof.
\end{proof}

We are now ready to  prove our main result.

\begin{theorem}\label{MAIN-TH}
Let $\{ S_k \}$ be the NUCC scheme
acting on the initial data of the form
$\ff^0 =  \{f(2^{-k_0}(n-\frac 12):n\in\ZZ\}$
with a function  $f\in W^3_\infty(\RR)$.
Put
$\tb = 2^{-k_0}t^{k+1}_j$ be a dyadic point.
Then if $|f(t)| \geq \tau$ with $\tau >0$  for  $|t -\tb| \leq 2^{-k_0-k}$,
then  the limit function
$\ff^\infty= \lim_{k\to\infty}S_k \cdots S_0 \ff^{0}$
satisfies the estimate
\eqn{ \label{APP-ORDER}
|\ff^\infty(\tb) - f(\tb)| \leq c_{f,\tau} 2^{-3k_0}
}
for a constant $c_{f,\tau} > 0$ dependent upon $f$ and
$\tau$ but independent of $k_0$.
\end{theorem}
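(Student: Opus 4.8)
The plan is to turn the pointwise bound \eqref{APP-ORDER} into a \emph{localized} version of the telescoping argument of Lemma~\ref{LEM-EST-F}, in which the crude per-level estimate of Lemma~\ref{LEM-EST-S-0} is everywhere replaced by the sharp one of Lemma~\ref{LEM-EST-S}. First I would record the reduction: since $\tb$ is dyadic and $\{S_k\}$ is convergent (Theorem~\ref{THM-CONV}), $\ff^\infty(\tb)=\lim_{m\to\infty}\ff^m_{n_m}$, where $n_m$ is the index for which $2^{-k_0}t^m_{n_m}$ is the grid point closest to $\tb$, so $|2^{-k_0}t^m_{n_m}-\tb|\le 2^{-k_0-m}$; as $\ffh^m_{n_m}=f(2^{-k_0}t^m_{n_m})\to f(\tb)$ by continuity, it is enough to bound $|\ff^m_{n_m}-\ffh^m_{n_m}|$ by $c_{f,\tau}2^{-3k_0}$ uniformly in $m$ and then let $m\to\infty$. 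Next, exactly as in the proof of Lemma~\ref{LEM-EST-F}, write $\ff^m-\ffh^m=\sum_{\ell=0}^{m-1}S_{m-1}\cdots S_{\ell+1}\,(S_\ell\ffh^\ell-\ffh^{\ell+1})$. Since, by Remark~\ref{RMK-1}, each mask $\bfa^{j,k}$ is a four-term perturbation of Chaikin's mask, its support lies in a fixed index set; hence $(S_{m-1}\cdots S_{\ell+1}\bg)_{n_m}$ depends only on $\{\bg_n: n\in W_\ell\}$ for a window $W_\ell$ of radius bounded by a constant $R$ (independent of $m,\ell,k_0$) centered at the level-$(\ell+1)$ index nearest $\tb$, and by the stability in Theorem~\ref{THM-CONV} it is at most $c\max_{n\in W_\ell}|\bg_n|$. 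Consequently $|\ff^m_{n_m}-\ffh^m_{n_m}|\le c\sum_{\ell=0}^{m-1}\max_{n\in W_\ell}|(S_\ell\ffh^\ell-\ffh^{\ell+1})_n|$.

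Now I would apply Lemma~\ref{LEM-EST-S} at every evaluation point $2^{-k_0}t^{\ell+1}_n$ with $n\in W_\ell$ and $\ell\ge0$. Each such point lies within $(R+1)2^{-k_0-\ell}\le(R+1)2^{-k_0}$ of $\tb$, so the neighborhood $|x-2^{-k_0}t^{\ell+1}_n|\le 2^{-k_0-\ell}$ required by Lemma~\ref{LEM-EST-S} sits inside $|x-\tb|\le(R+2)2^{-k_0}$. Since $f(\tb)\ne0$ (which the hypothesis provides), continuity of $f$ yields $\rho,\tau>0$ with $|f|\ge\tau$ on $|x-\tb|\le\rho$; for every $k_0$ large enough that $(R+2)2^{-k_0}\le\rho$, Lemma~\ref{LEM-EST-S} then applies at \emph{all} of these points with a constant $c_{f,\tau}$ uniform in $\ell$ and in the index, giving $\max_{n\in W_\ell}|(S_\ell\ffh^\ell-\ffh^{\ell+1})_n|\le c_{f,\tau}2^{-(3k_0+2\ell)}$. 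Summing the geometric series yields $|\ff^m_{n_m}-\ffh^m_{n_m}|\le c\,c_{f,\tau}2^{-3k_0}\sum_{\ell\ge0}4^{-\ell}=\frac{4}{3}c\,c_{f,\tau}2^{-3k_0}$, uniformly in $m$; letting $m\to\infty$ proves \eqref{APP-ORDER}. (The finitely many small $k_0$ below the threshold are absorbed into $c_{f,\tau}$, since $|\ff^\infty(\tb)-f(\tb)|\le\|\ff^\infty-f\|_\infty\le c\|f\|_\infty$ by stability; the case of the alternative mask \eqref{MASK-20} is identical once the analogue of Lemma~\ref{LEM-EST-S} for that mask is in hand.)

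I expect the localization step — showing that Lemma~\ref{LEM-EST-S} is applicable at \emph{every} level $\ell$, including the smallest ones — to be the only real obstacle. It is essential: the naive bound from Lemma~\ref{LEM-EST-S-0} for the low-order terms is only $O(2^{-2k_0-\ell})$, and even though there are finitely many of them, their sum is $O(2^{-2k_0})$, which would destroy the claimed rate. What rescues the argument is that the scheme is local and the windows $W_\ell$ around $\tb$ at level $\ell+1$ shrink like $2^{-k_0-\ell}$, so a positivity neighborhood of $\tb$ of radius a fixed multiple of $2^{-k_0}$ — slightly larger than the literal $2^{-k_0-k}$ written in the statement, but available for large $k_0$ by continuity of $f$ at $\tb$ — controls all levels simultaneously. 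Hence the \emph{entire} telescoping sum is governed by the third-order estimate of Lemma~\ref{LEM-EST-S}, and only the geometric factor $2^{-2\ell}$ is lost in the summation.
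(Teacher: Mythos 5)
Your proposal is correct and follows the same basic route as the paper's proof: telescope $\ff^{m}-\ffh^{m}$ over the levels, bound each increment $S_{\ell}\ffh^{\ell}-\ffh^{\ell+1}$ by the third-order estimate of Lemma~\ref{LEM-EST-S}, and sum the geometric series $\sum_{\ell}2^{-(3k_0+2\ell)}\le c\,2^{-3k_0}$. The one place where you go beyond the paper is the localization step, and it is a genuine improvement rather than a detour: the paper applies Lemma~\ref{LEM-EST-S} inside a global $\|\cdot\|_\infty$ bound, which strictly speaking requires the lower bound $|f|\ge\tau$ at \emph{every} evaluation point of \emph{every} level, not just near $\tb$ as the hypothesis states. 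Your observation that the four-tap masks make the scheme local, so that $\ff^m_{n_m}$ depends only on increments supported in windows $W_\ell$ whose grid points all lie within a fixed multiple of $2^{-k_0}$ of $\tb$, is exactly what reconciles the local hypothesis with the telescoping argument; combined with continuity of $f$ to extend the positivity to a fixed radius $\rho$, and absorbing the finitely many $k_0$ with $(R+2)2^{-k_0}>\rho$ into the constant via stability, this yields the pointwise estimate \eqref{APP-ORDER} cleanly. Your warning that one cannot afford to fall back on Lemma~\ref{LEM-EST-S-0} for even finitely many low levels (since that would degrade the rate to $O(2^{-2k_0})$) is also correct and is precisely why the whole sum must be controlled by Lemma~\ref{LEM-EST-S}.
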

\begin{proof}
Using the telescoping sum (as in the proof of Lemma \ref{LEM-EST-F})
and applying Lemma \ref{LEM-EST-S},
we can estimate the error between $\ff^{k+1}$ and $\ffh^{k+1}$ as
\eq{
\| \ff^{k+1} - \ffh^{k+1} \|_\infty \leq
c \sum_{\ell = 0}^{k} \| S_{\ell} \ffh^\ell - \ffh^{\ell+1} \|_\infty
\leq
c_{f,\tau} \sum_{\ell = 0}^{k} 2^{-(3k_0+2\ell)}
\leq c_{f,\tau}2^{-3k_0}
}
for a constant $c_{f,\tau} > 0$ depending on $f$ and
$\tau$.
Since this relation holds for any dyadic point $t^{k+1}_j$,
we can obtain the desired result.
\end{proof}

\begin{corollary}
Let the initial data be of the form $\ff^{0}=\{f(2^{-k_0}t^0_n):n\in\ZZ\}$
with $f \in W^3_\infty(\RR)$.
Assume that
$\|\ff^{0}\|_\infty\geq \tau $ for some $\tau>0$.
Under the same conditions of Theorem \ref{MAIN-TH}, we have
\eq{
\| \ff^\infty - f \|_\infty \leq c_{f,\tau} 2^{-3k_0}.
}
\end{corollary}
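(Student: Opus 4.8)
The plan is to upgrade the pointwise estimate at dyadic points furnished by Theorem~\ref{MAIN-TH} to the uniform bound, using three facts: the constant $c_{f,\tau}$ in Theorem~\ref{MAIN-TH} depends only on $f$ and $\tau$ and so is independent of the evaluation point; the dyadic points at which that estimate holds form a dense subset of $\RR$; and both $\ff^\infty$ and $f$ are continuous.

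First I would dispose of small $k_0$. Since $f\in W^3_\infty(\RR)$ we have $\|\ff^0\|_\infty\le\|f\|_\infty<\infty$, so by the stability assertion of Theorem~\ref{THM-CONV} the iterates satisfy $\|\ff^k\|_\infty\le C\|f\|_\infty$ uniformly in $k$, and hence $\|\ff^\infty-f\|_\infty\le(C+1)\|f\|_\infty$. Thus, once $k_0$ lies below any fixed threshold, the claimed inequality holds automatically after enlarging the constant, and we may freely assume $k_0$ as large as we wish.

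Next I would check that Theorem~\ref{MAIN-TH} applies, with a single constant, at every pertinent dyadic point. To invoke it at $\tb=2^{-k_0}t^{k+1}_j$ one needs $|f(t)|\ge\tau$ on $\{\,|t-\tb|\le 2^{-k_0-k}\,\}$; under the hypotheses at hand this holds at every dyadic point---directly, or (reading the assumption on $\ff^0$ as keeping the sampled data away from zero, in line with the standing assumptions of the paper) from the Lipschitz continuity of $f$, which follows from $\|f'\|_\infty<\infty$, once $k_0$ is large---and the constant furnished by Theorem~\ref{MAIN-TH} depends only on $f,\tau$, hence not on $\tb$. This gives
\[
|\ff^\infty(\tb)-f(\tb)|\le c_{f,\tau}\,2^{-3k_0}\qquad\text{for every }\tb\in D:=\{\,2^{-k_0}t^{k+1}_j:\ k\in\ZZP,\ j\in\ZZ\,\}.
\]
Since the level-$(k+1)$ grid has spacing $2^{-k_0-k-1}\to 0$ as $k\to\infty$, the set $D$ is dense in $\RR$, while $\ff^\infty$ is continuous (by Theorem~\ref{THM-CONV}) and $f$ is continuous. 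A continuous function bounded in modulus by $c_{f,\tau}2^{-3k_0}$ on a dense subset of $\RR$ is bounded by the same quantity on all of $\RR$; hence $\|\ff^\infty-f\|_\infty\le c_{f,\tau}2^{-3k_0}$.

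I expect the main obstacle to be precisely the uniformity issue in the middle step: ensuring that the local lower-bound hypothesis of Theorem~\ref{MAIN-TH} can be met \emph{simultaneously} at all dyadic points with a common $\tau$, so that $c_{f,\tau}$ does not degenerate as $\tb$ varies over $D$. This is exactly where one exploits that the data stays away from zero, the uniform continuity of $f$, and the freedom to take $k_0$ large; the reduction to small $k_0$ and the density-plus-continuity conclusion are routine.
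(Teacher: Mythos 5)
Your argument is correct and is essentially the one the paper intends: the corollary is stated without proof, and the closing line of the proof of Theorem \ref{MAIN-TH} ("since this relation holds for any dyadic point\ldots") already signals exactly your route of applying the pointwise estimate at every dyadic point with a uniform constant and then passing to all of $\RR$ by density of the dyadic points and continuity of $\ff^\infty$ and $f$. Your extra care about meeting the local lower-bound hypothesis with a single $\tau$ (reading the assumption as keeping the data away from zero, consistent with the paper's standing assumptions in Section \ref{SEC-CONST}) is a reasonable and welcome clarification of a point the paper glosses over.
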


The following theorem treats the local approximation order of the
NUCC scheme for the (alternative) case that is suggested in \eqref{MASK-20}.
It can be proved in a similar fashion to the case of Theorem \ref{MAIN-TH}.

\begin{theorem}
Let $f \in W^3_\infty(\RR)$ and put $\tb = 2^{-k_0}t^{k+1}_j$
be a dyadic point.  Assume that $|f'(t)| \geq \tau$ with $\tau >0$
for  $|t -\tb| \leq 2^{-k_0-k}$.  Then, under the same conditions
and assumptions of Theorem \ref{MAIN-TH}, we have
\eqn{
|\ff^\infty(\tb) - f(\tb)| \leq c_{f,\tau} 2^{-3k_0}
}
for a constant $c_{f,\tau} > 0$.
\end{theorem}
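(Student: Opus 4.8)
The plan is to rerun the proof of Theorem~\ref{MAIN-TH} for the NUCC scheme built from the alternative mask \eqref{MASK-20}, now with the values $f(\tb),\ff^k,\ffh^k$ in the leading roles replaced by the first‑order quantities $f'(\tb),\nabla\ff^k,\nabla\ffh^k$, and with the exponential space $\SPAN\{\exp(\pm\gammah_{j_\nu}(\cdot-\tb))\}$ replaced by $\EE_2=\SPAN\{1,\exp(\gammah_{j_\nu}(\cdot-\tb))\}$. The one genuinely new preliminary is a uniform‑in‑$k$ estimate $\nabla\ff^k_{j+1}=2^{-k_0}f'(\tb)+O(2^{-2k_0})$ together with a lower bound $|\nabla\ff^k_{j+1}|\ge c\tau2^{-k_0}$; granting this (it is the last and hardest step), the masks \eqref{MASK-20} differ from Chaikin's by $O(2^{-k_0-k})$, so Lemmas~\ref{LEM-EST-S-0} and \ref{LEM-EST-F} go through as before and give $\|\ff^k-\ffh^k\|_\infty\le c_{f,\tau}2^{-2k_0}$. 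As in the proof of Theorem~\ref{MAIN-TH}, it then suffices to prove the sharp per‑level estimate $|S_k\ffh^k_{j_\nu}-\ffh^{k+1}_{j_\nu}|\le c_{f,\tau}2^{-(3k_0+2k)}$ and to sum the resulting telescoping series.

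For the per‑level estimate I would introduce the auxiliary operator $\widetilde Q_\nu f(t)=\widetilde\LL_{0,\nu}(t)f(2^{-k_0}t^k_j)+\widetilde\LL_{1,\nu}(t)f(2^{-k_0}t^k_{j+1})$, with $\widetilde\LL_{0,\nu},\widetilde\LL_{1,\nu}$ the Lagrange functions on $\{2^{-k_0}t^k_j,2^{-k_0}t^k_{j+1}\}$ in $\EE_2$ and shape parameter $\gammah_{j_\nu}:=\ffdh^k_{j+\nu}\big/\big(2^{k_0+k}(\ffh^k_{j+1}-\ffh^k_j)\big)$, chosen so that $\gammah_{j_\nu}=f''(\tb)/f'(\tb)+O(2^{-k_0-k})$; here the hypothesis $|f'|\ge\tau$ near $\tb$ makes the denominator at least $\tfrac12\tau$, so $\gammah_{j_\nu}$ is well defined and bounded. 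The analogue of Lemma~\ref{LEM-EST-Q} follows by comparing $f$ with $G_f:=A+B\exp(\gammah_{j_\nu}(\cdot-\tb))\in\EE_2$, where $A,B$ are fixed by $G_f(\tb)=f(\tb)$ and $G_f'(\tb)=f'(\tb)$: the choice of $\gammah_{j_\nu}$ forces $G_f''(\tb)=f'(\tb)\gammah_{j_\nu}=f''(\tb)+O(2^{-k_0-k})$, so a Taylor expansion of $f-G_f$ about $\tb$ gives $|f(t)-G_f(t)|\le c_{f,\tau}2^{-3(k_0+k)}$ for $|t-\tb|\le2^{-k_0-k}$, and the exponential reproduction of $G_f$ by $\widetilde Q_\nu$ transfers this to $|\widetilde Q_\nu f(\tb)-f(\tb)|\le c_{f,\tau}2^{-3(k_0+k)}$.

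It remains, as in Lemma~\ref{LEM-EST-S}, to bound $|S_k\ffh^k_{j_\nu}-\widetilde Q_\nu f(\tb)|$, which is a linear combination of $\ffh^k_j,\ffh^k_{j+1}$ with coefficients $\bfa^{j,k}_n-\widetilde\LL_{n,\nu}(\tb)$. Both $\bfa^{j,k}_n$ (from \eqref{MASK-20}) and $\widetilde\LL_{n,\nu}(\tb)$ are, up to the partition‑of‑unity relation, values of the fixed smooth map $x\mapsto(e^{\beta x}-1)/(e^{x}-1)$, $\beta\in\{1/4,3/4\}$, evaluated at $\gamma_{j_\nu}2^{-k}$ and at $\gammah_{j_\nu}2^{-k_0-k}$ respectively; hence by Lipschitz continuity the whole matter reduces to $|\gamma_{j_\nu}2^{-k}-\gammah_{j_\nu}2^{-k_0-k}|\le c_{f,\tau}2^{-3k_0-2k}$. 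Now $\bd^k=S^k_\bfa\Delta\ff^0$ with $\Delta\ff^0_n=2^{-2k_0}f''(2^{-k_0}(n-\tfrac12))+O(2^{-3k_0})$, so by the second‑order accuracy and non‑expansiveness of $S_\bfa$ one has $\bd^k_{j+\nu}=2^{-2k_0}\ffdh^k_{j+\nu}+O(2^{-3k_0})$; combining this with the new preliminary $\nabla\ff^k_{j+1}=2^{-k_0}f'(\tb)+O(2^{-2k_0})\ (\ge c\tau2^{-k_0})$ and with $\nabla\ffh^k_{j+1}=2^{-k_0}f'(\tb)+O(2^{-2k_0})$ --- the regularizer $\epsilon$ being negligible since $|\epsilon|=2^{-2k_0}\ll2^{-k_0}\tau$ (cf.\ Remark~\ref{RMK-EP}) --- identifies $\gamma_{j_\nu}$ and $\gammah_{j_\nu}$ to the required order after one expansion of the quotient.

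The uniform‑in‑$k$ control of $\nabla\ff^k$ is the main obstacle, since $\nabla$ amplifies the $O(2^{-2k_0})$ $\ell^\infty$‑error of Lemma~\ref{LEM-EST-F} by $2^k$, so nothing can be concluded from that lemma alone. I would obtain it from the exact difference scheme: because the masks \eqref{MASK-20} satisfy $a^{j,k}(1)=2$ and $a^{j,k}(-1)=0$ (cf.\ \eqref{LAU-S2}), the sequences $\nabla\ff^k$ are generated by a non‑uniform scheme $\{S^\Delta_k\}$ asymptotically equivalent to the (linear B‑spline) derivative scheme $S_\bfb$ of the quadratic B‑spline, which reproduces linear polynomials. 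A bootstrap on $k$ then closes the estimate: from $|\nabla\ff^k_{j+1}|\ge\tfrac12\tau2^{-k_0}$ one gets $|\gamma_{j_\nu}|=O(2^{-k_0}/\tau)$, hence the mask of $S^\Delta_k$ lies within $O(2^{-k_0-k}/\tau)$ of that of $S_\bfb$, and the linear reproduction of $S_\bfb$ propagates $\|\nabla\ff^{k+1}-2^{-k_0}\{f'(2^{-k_0}t^{k+1}_n)\}_n\|_\infty=O(2^{-2k_0})$ from level $k$ to level $k+1$, which for $k_0$ large (depending on $\tau$) re‑establishes the lower bound; the base case $k=0$ is immediate since $\nabla\ff^0_{j+1}=2^{-k_0}f'(\cdot)+O(2^{-3k_0})$. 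With both facts in hand the sharp per‑level estimate holds, and summing the telescoping series exactly as in Theorem~\ref{MAIN-TH} yields $|\ff^\infty(\tb)-f(\tb)|\le c_{f,\tau}2^{-3k_0}$ at every dyadic point; the case $j_\nu=2j+1$ is handled the same way.
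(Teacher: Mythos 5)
The paper itself offers no argument for this theorem beyond ``the same technique,'' so your decision to actually rerun the machinery of Lemmas~\ref{LEM-EST-S-0}--\ref{LEM-EST-S} for the mask \eqref{MASK-20} is exactly the right move, and you correctly isolate the one genuinely new obstacle that the paper glosses over: the denominator of $\gamma_{j_\nu}$ in \eqref{GAM-ODD} is the first difference $\nabla\ff^k_{j+1}$, which Lemma~\ref{LEM-EST-F} cannot control uniformly in $k$ (its $O(2^{-2k_0})$ bound on $\ff^k-\ffh^k$ is amplified by $2^k$). Your proposed remedy --- passing to the derived scheme for $\nabla\ff^k$, which exists because $a^{j,k}(-1)=0$ holds exactly by \eqref{LAU-S2}, and bootstrapping the lower bound $|\nabla\ff^k_{j+1}|\ge c\tau 2^{-k_0}$ --- is sound, as is the construction of $G_f=A+Be^{\gammah_{j_\nu}(\cdot-\tb)}$ with $G_f''(\tb)=f'(\tb)\gammah_{j_\nu}=f''(\tb)+O(2^{-k_0-k})$.

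There is, however, one step that fails as stated: the reduction of the per-level estimate to $|\gamma_{j_\nu}2^{-k}-\gammah_{j_\nu}2^{-k_0-k}|\le c2^{-3k_0-2k}$. That inequality is unattainable. Since $\bd^k_{j+\nu}=2^{-2k_0}\ffdh^k_{j+\nu}+O(2^{-3k_0})$ and $\nabla\ff^k_{j+1}+\epsilon=2^{-k_0}f'(\tb)+O(2^{-2k_0})$ (the $\epsilon=O(2^{-2k_0})$ term alone already contributes at this level), both numerator and denominator carry a \emph{relative} error of size $O(2^{-k_0})$, so $\gamma_{j_\nu}2^{-k}$ and $\gammah_{j_\nu}2^{-k_0-k}$ agree only to $O(2^{-2k_0-k})$ --- a factor $2^{k_0+k}$ short of your target. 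This matters because, unlike the $\sinh$-masks of \eqref{MASK} which are even in $\gamma$ and hence depend on the small parameter quadratically (this is what silently supplies the extra $2^{-k_0-k}$ in \eqref{E-0}), the map $x\mapsto(e^{\beta x}-1)/(e^x-1)$ has nonzero derivative at $0$, so the coefficient error is genuinely first order in the argument error. The repair is the cancellation you only gesture at: since $\bfa^{j,k}_0+\bfa^{j,k}_{-2}=1=\widetilde\LL_{0,\nu}(\tb)+\widetilde\LL_{1,\nu}(\tb)$, one has
\begin{equation*}
S_k\ffh^k_{2j}-\widetilde Q_0 f(\tb)=\bigl(\bfa^{j,k}_{-2}-\widetilde\LL_{1,0}(\tb)\bigr)\bigl(\ffh^k_{j+1}-\ffh^k_j\bigr),
\end{equation*}
and the factor $\ffh^k_{j+1}-\ffh^k_j=O(2^{-k_0-k})$ restores the bound $O(2^{-2k_0-k})\cdot O(2^{-k_0-k})=O(2^{-3k_0-2k})$. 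Note that the same cancellation is already needed in your version of Lemma~\ref{LEM-EST-S-0}: the crude bound $\|\bfa^{j,k}-\bfa\|_\infty\|\ffh^k\|_\infty=O(2^{-k_0-k})$ telescopes only to $O(2^{-k_0})$, whereas pairing the coefficient error with the first difference gives $O(2^{-2k_0-2k})$ per level and hence the required $\|\ff^k-\ffh^k\|_\infty=O(2^{-2k_0})$. With the cancellation made explicit at both places, your argument closes.
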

\begin{proof}
The proof can be done by applying the same technique above.
\end{proof}

\begin{figure*}[t!]
\centering
\includegraphics[width=0.32\textwidth]{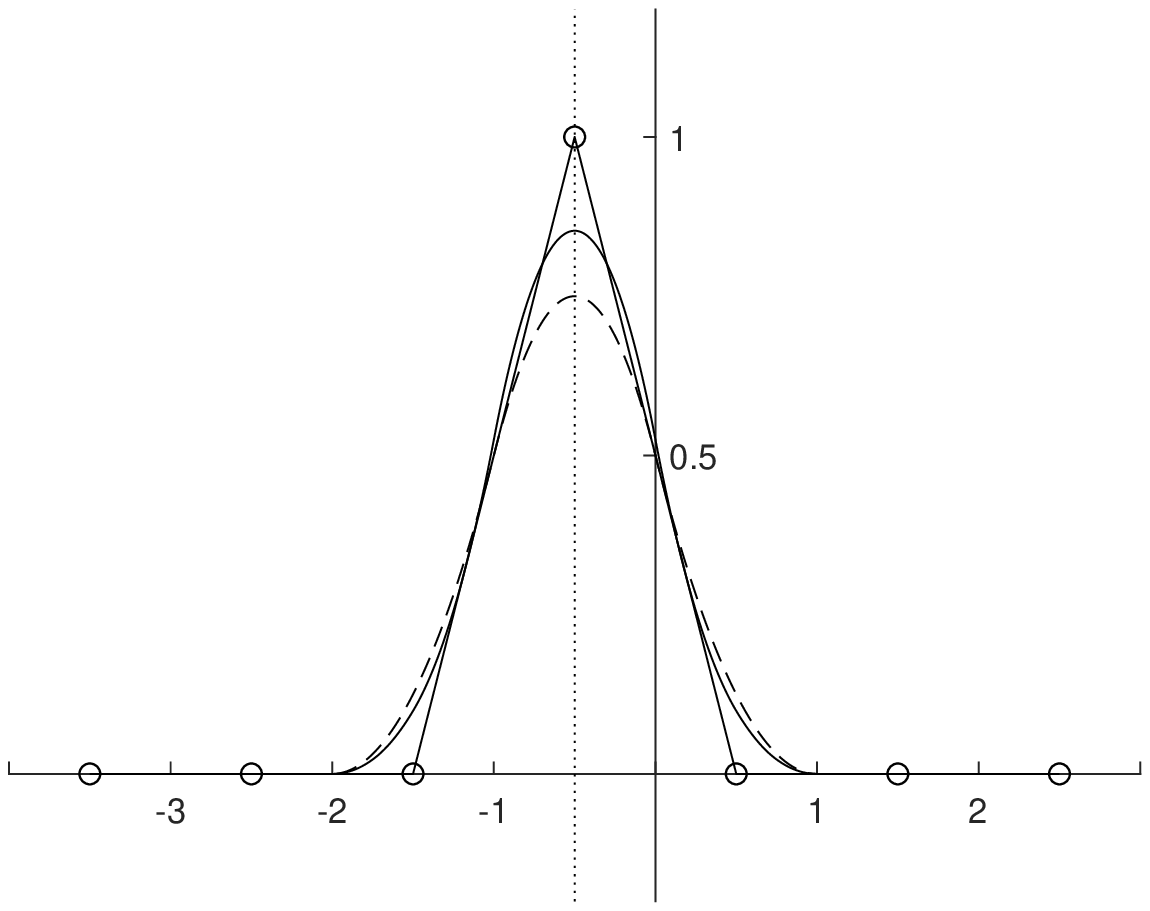}
\includegraphics[width=0.32\textwidth]{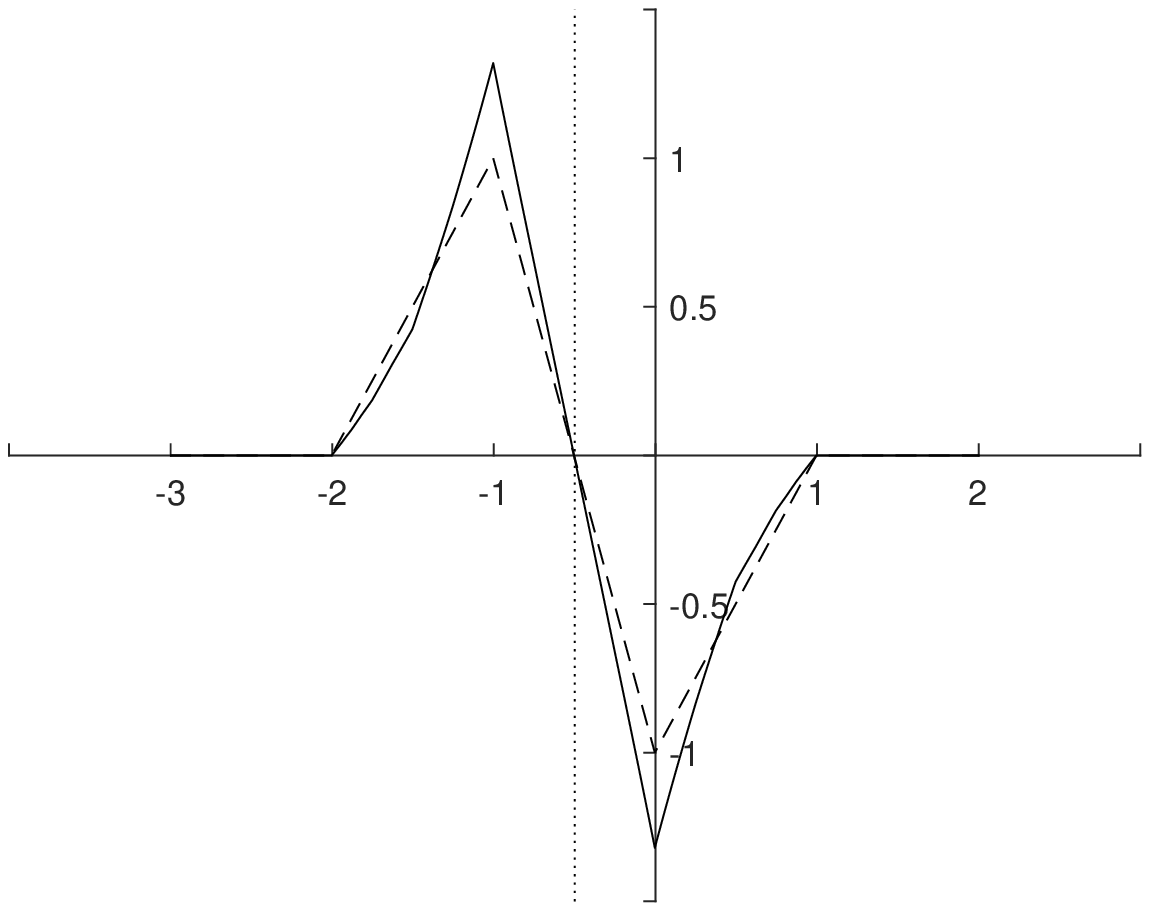}
\includegraphics[width=0.32\textwidth]{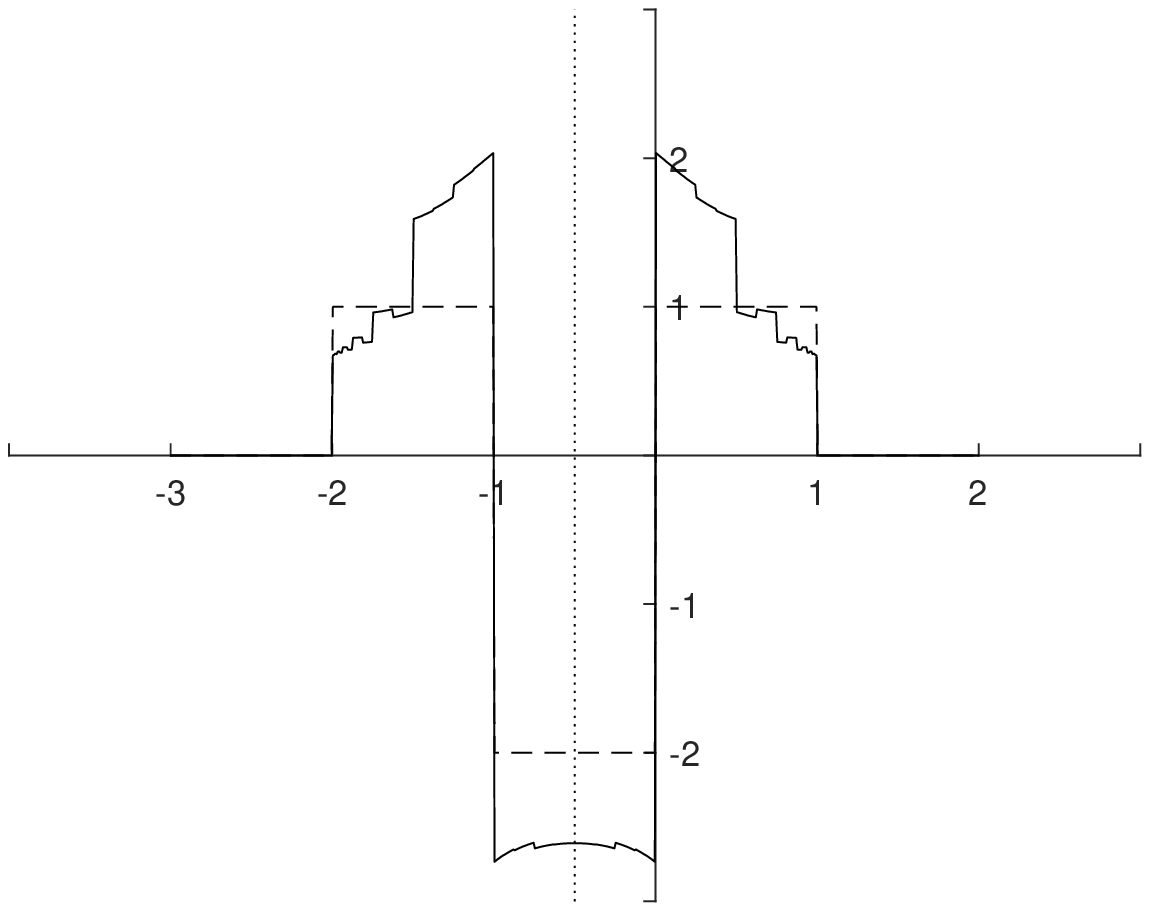}
\caption{Limit functions of the classical corner-cutting
algorithm (dashed line) and the proposed scheme (real line),
and their first- and second-order derivatives.
}
\label{FIG-S}
\end{figure*}

\section{Numerical examples} \label{SEC-NE}

In this section, we present some numerical examples to demonstrate
the performance of the NUCC scheme.
The first example verifies
that the proposed NUCC scheme is  $C^1$.
It supports the theoretical result in Section \ref{SEC-CS}.

\begin{example}
{\rm (Smoothness)
In this example, we put the initial data $\ff^0$ as
the Kronecker delta sequence, i.e.,
$\ff^0 = \{ \ff^0_j := \delta_{j,0} : j \in \ZZ \}$
where $\delta_{j,0}$ equals zero, if $j\ne 0$, and one
otherwise.
In order to get the limit function,
we recursively apply the refinement rule $\{S_k\}$  to $\ff^0$
using the mask given in \eqref{MASK}.
In this experiment, we set $\epsilon$
in the shape parameter $\gamma_j$
as $|\epsilon| = 1$.
We compare the limit function with the one generated
by the classical corner-cutting algorithm.
Figure \ref{FIG-S} depicts the two limit functions,
and their first- and second-order derivatives.

}
\end{example}

\begin{table}[t]
\centering
\caption{Comparison of the approximation order to the function defined in \eqref{T-FUNC}.}
\medskip
\begin{tabular}{ccccccc}
\hline
Density of initial data && \multicolumn{2}{c}{Max. Error} && \multicolumn{2}{c}{Approximation Order}\\
\cline{3-4} \cline{6-7}
 && Exp. B-spline & Proposed && Exp. B-spline & Proposed \\
\hline
1 && 8.6789E-02 & 5.0305E-02 &&     &     \\
$2^{-1}$ && 2.3629E-02 & 6.2276E-03 && 1.9 & 3.0 \\
$2^{-2}$ && 6.1175E-03 & 6.2632E-04 && 1.9 & 3.3 \\
$2^{-3}$ && 1.5701E-03 & 7.5863E-05 && 2.0 & 3.0 \\
$2^{-4}$ && 3.9306E-04 & 9.2633E-06 && 2.0 & 3.0 \\
$2^{-5}$ && 9.8297E-05 & 1.1537E-06 && 2.0 & 3.0 \\
$2^{-6}$ && 2.4576E-05 & 1.4397E-07 && 2.0 & 3.0 \\
$2^{-7}$ && 6.1442E-06 & 1.7986E-08 && 2.0 & 3.0 \\
$2^{-8}$ && 1.5360E-06 & 2.2479E-09 && 2.0 & 3.0 \\
$2^{-9}$ && 3.8394E-07 & 2.8126E-10 && 2.0 & 3.0 \\
\hline
\end{tabular}
\label{TAB-AO}
\end{table}

\begin{figure*}[t]
\centering
\includegraphics[width=0.36\textwidth]{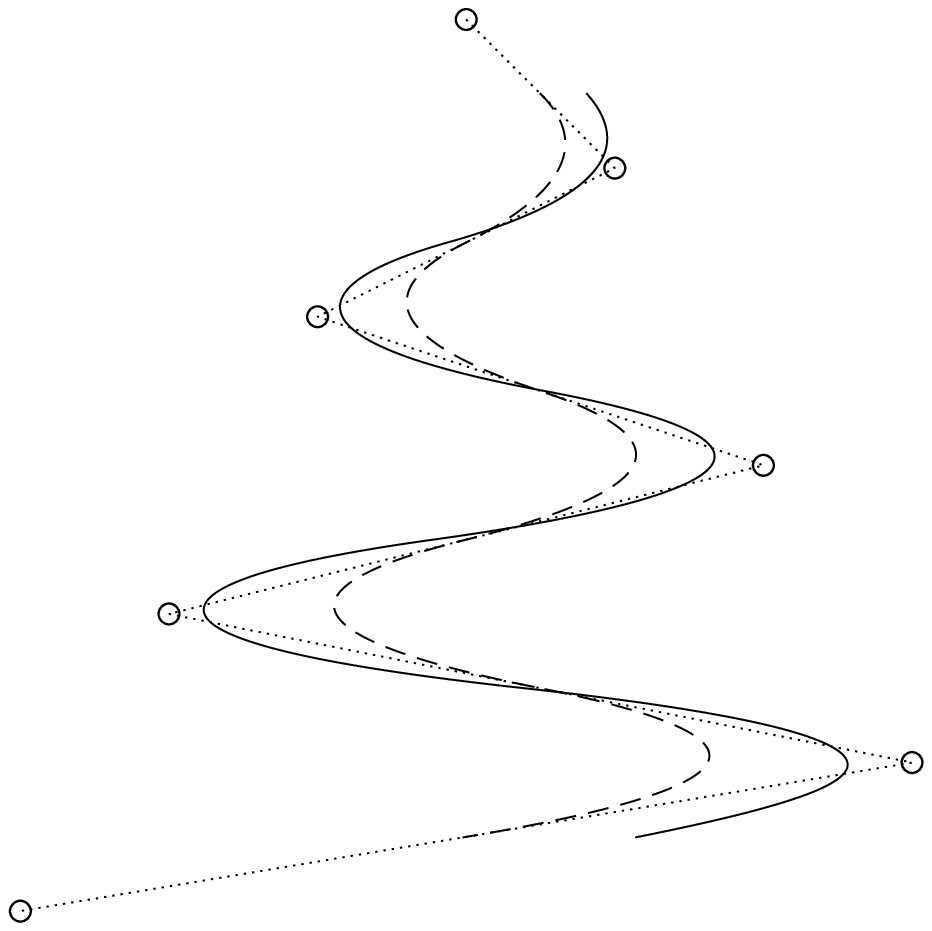}
\includegraphics[width=0.26\textwidth]{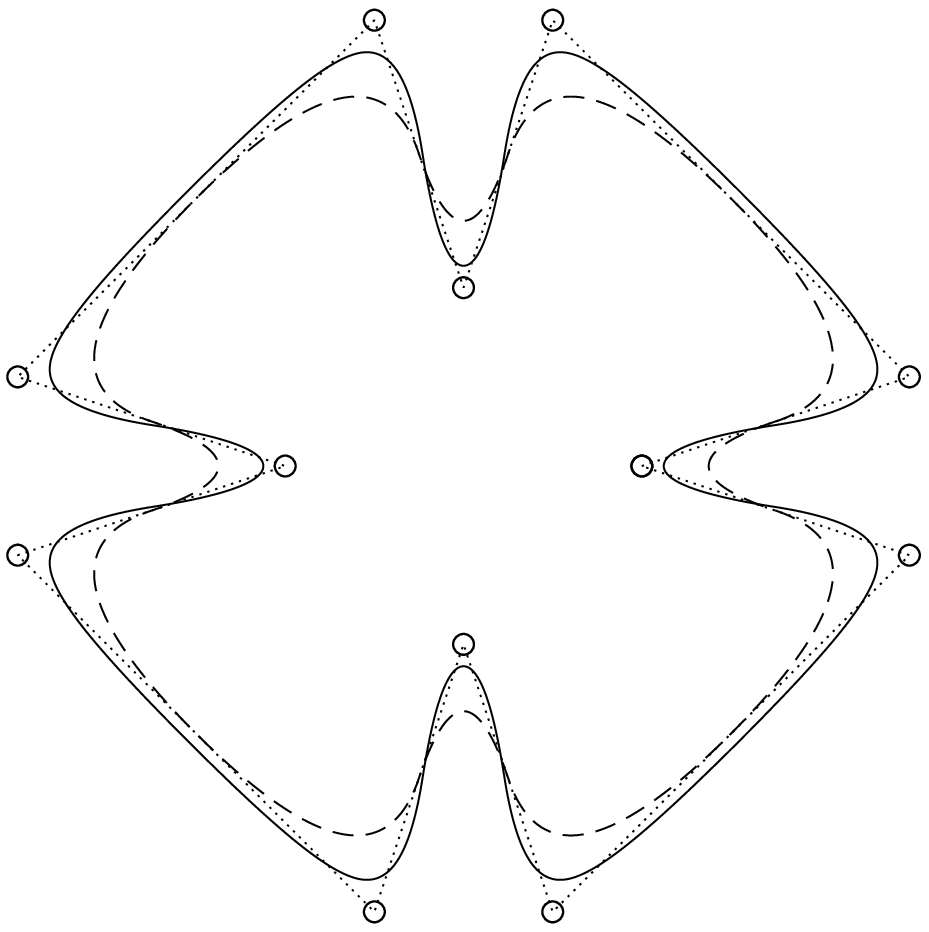} \qquad
\includegraphics[width=0.26\textwidth]{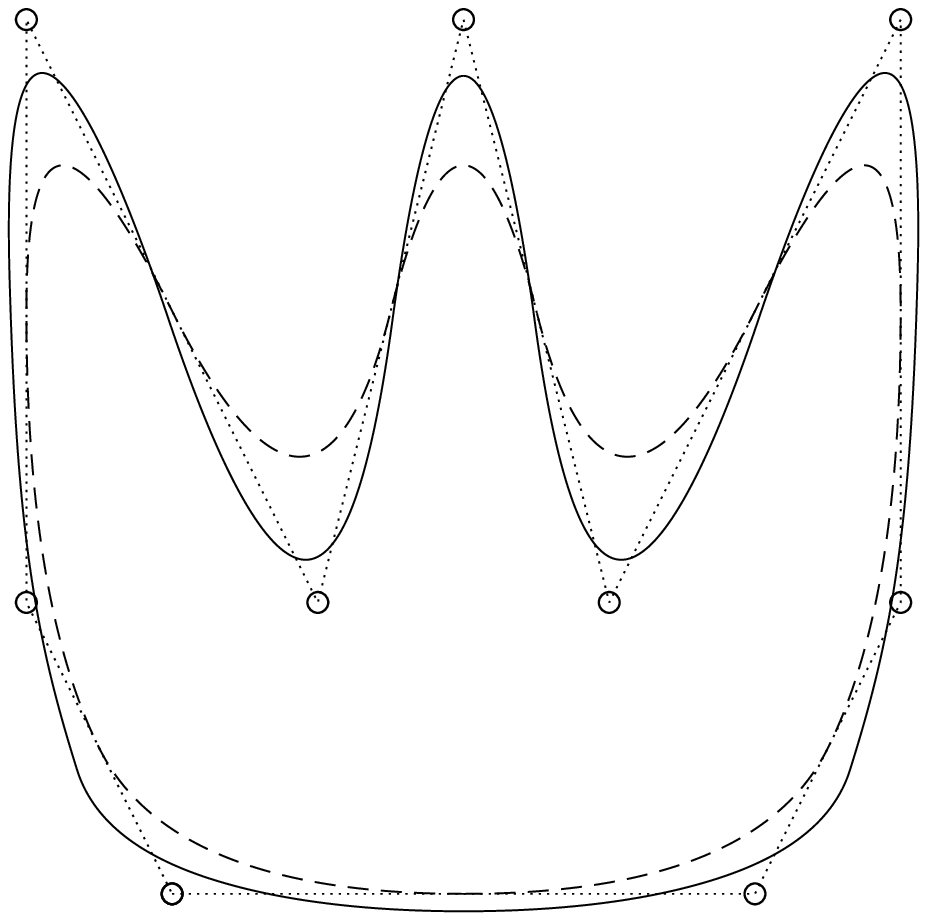}
\caption{Curve-fitting results:
classical corner-cutting method (dashed line)
and the new algorithm with $|\epsilon| = 1$ (real line).}
\label{FIG-CD}
\end{figure*}


The following example treats the approximation order of the NUCC scheme
discussed in Section \ref{SEC-AO},

\begin{example}
{\rm (Approximation order)
This  example tests the numerical  approximation order of the
NUCC scheme for
the (scaled one-dimensional Franke) function
\eqn{ \label{T-FUNC}
f(t):= \frac{3}{4} e^{-(\frac 98 t-2)^2/4}
+ \frac{3}{4} e^{-(\frac 98 t+1)^2/49}
+ \frac{1}{2} e^{-(\frac 98 t-7)^2/4} - \frac{1}{5} e^{-(\frac 98 t-4)^2}.
}
The initial data is sampled from the function $f$ with density $2^{-k_0}$
for $k_0=0,\ldots, 9$.
As verified in \cite{JKLY-13}, an exponential B-spline subdivision
can provide the approximation order at most {\em two}
when a suitable normalization factor is taken.
Thus, we employ the normalized exponential B-spline scheme of degree $2$
reproducing two exponential polynomials
$\{\exp(\gamma \cdot), \exp(-\gamma \cdot) \}$
with $\gamma = 1/2$.
The corresponding subdivision masks are given by
\eq{
\bfa^{[k]}_{-2} = \frac{\sinh(2^{-k-3})}{\sinh(2^{-k-1})},\quad
\bfa^{[k]}_{-1} = \frac{\sinh(3 \cdot 2^{-k-3})}{\sinh(2^{-k-1})},\quad
\bfa^{[k]}_{0} = \bfa^{[k]}_{-1},\quad
\bfa^{[k]}_{1} = \bfa^{[k]}_{-2}.
}
Table \ref{TAB-AO} displays the experimental results.
As one can see in Table \ref{TAB-AO},
the NUCC scheme achieves the third-order accuracy
while the exponential B-spline scheme attains the second order accuracy.

}
\end{example}

\begin{example}
{\rm (Curve fitting)
In this example, we compare the curve-fitting performance of
the classical corner-cutting method and the NUCC algorithm.
Figure \ref{FIG-CD} shows the resulting curves generated from several
initial data.
For this experiment, we used $|\epsilon| = 1$.
The limit curves of the proposed algorithm better follow the sharp corners
than those of the classical scheme.

}
\end{example}

\noindent{\bf Acknowledgements }
J. Yoon was supported in part by the National Research Foundation of Korea
under grant NRF-2020R1A2C1A01005894.





\begin{thebibliography}{9}

\bibitem{BCR-07}
{\sc C. Beccari, G. Casciola, L. Romani},
{\em A non-stationary uniform tension controlled interpolating 4-point scheme reproducing conics}, Comput. Aided Geom. Design, {\bf 24} (2007) 1–9.

\bibitem{B-87}
{\sc C. de Boor},
{\em Cutting corners always works},
Comput. Aided Geom. Design, {\bf 4} (1987) 125–131.

\bibitem{CDM-91}
{\sc A. Cavaretta, W. Dahmen, C. A. Micchelli},
{\em  Stationary Subdivision},
Mem. Amer. Math. Soc.  {\bf 93}(453) (1991) 1-186.

\bibitem{CC-78}
{\sc E. Catmull, J. Clark},
{\em Recursively generated B-spline surfaces on arbitrary topological meshes},
Comput. Aided Design {\bf 10} (1978) 350-355.


\bibitem{C-74}
{\sc G. M. Chaikin},
{\em An algorithm for high speed curve generation},
Comput. Vis. Graph. Image Process. {\bf 3} (1974) 346–349.


\bibitem{CC-13}
{\sc M. Charina, C. Conti},
{\em Polynomial reproduction of multivariate scalar subdivision schemes},
J. Comput. Appl. Math., {\bf 240} (2013) 51–61.

\bibitem{CCR-14}
{\sc M. Charina, C. Conti, L. Romani},
{\em Reproduction of exponential polynomials by multivariate non-stationary subdivision schemes
with a general dilation matrix},
Numer. Math., {\bf 127}(2) (2014) 223–254.

\bibitem{CLR-80}
{\sc E. Cohen, T. Lyche, R. Riesenfeld},
{\em Discrete B-splines and subdivision techniques in Computer-Aided Geometric Design
and Computer Graphics},
Comput. Graph. and Image Process {\bf 14} (1980) 87-111.



\bibitem{CD-18}
{\sc C. Conti, N. Dyn},
{\em Convergence and smoothness of tensor-product of two non-uniform linear subdivision schemes},
Comput. Aided Geom. Design {\bf 66} (2018) 16-18.

\bibitem{CH-11}
{\sc C. Conti, K. Hormann},
{\em Polynomial reproduction for univariate subdivision schemes of any arity},
J. Approx. Theory, {\bf 163} (2011) 413–437.


\bibitem{CGR-11}
{\sc C. Conti, L. Gemignani, L. Romani},
{\em From approximating to interpolatory non-stationary subdivision schemes
with the same generation properties},
Adv. Comput. Math., {\bf 35}(2-4) (2011) 217–241.

\bibitem{CGR-16}
{\sc C. Conti, L. Gemignani, L. Romani},
{\em Exponential pseudo-splines: Looking beyond exponential B-splines},
J. Math. Anal. Appl. {\bf 439}(1) (2016) 32–56.

\bibitem{CR-11}
{\sc C. Conti, L. Romani},
{\sc Algebraic conditions on non-stationary subdivision symbols for exponential polynomial
reproduction},
J. Comput. Appl. Math. {\bf 236} (2011) 543–556.


\bibitem{DD-89}
{\sc G. Deslauriers, S. Dubuc},
{\em Symmetric iterative interpolation},
Constr. Approx. {\bf 5} (1989) 49–68.

\bibitem{DY-2019}
{\sc R. Donat, D. F. Y\'{a}\~{n}ez},
{\em A nonlinear Chaikin-based binary subdivision scheme},
J. Comput. Appl. Math. {\bf 349} (2019) 379-389.

\bibitem{DS-78}
{\sc D. Doo, M. Sabin},
{\em Behaviour of recursive division surfaces near extraordinary points},
Comput. Aided Des. {\bf 10}(6) (1978) 356-360.


\bibitem{D-92}
{\sc N. Dyn},
{\em Subdivision Schemes in Computer-Aided Geometric Design},
Advances in Numerical Analysis Vol. II: Wavelets, Subdivision Algorithms
and Radial Basis Functions (W.A. Light ed.),
Oxford University Press (1992), 36-104.

\bibitem{DHSS-08}
{\sc N. Dyn, K. Hormann, M.A. Sabin, Z. Shen},
{\em Polynomial reproduction by symmetric subdivision schemes},
J. Approx. Theory, {\bf 155} (2008) 28–42.

\bibitem{DL-91}
{\sc N. Dyn, D. Levin},
{\em The subdivision experience},
In Laurent, P., LeM\'{e}haut\'{e},
A. and Schumaker, L. (eds.): Curves and surfaces II, Wellesley, MA: A. K.
Peters, 1991.

\bibitem{DL-95}
{\sc N. Dyn, D. Levin},
{\em Analysis of asymptotically equivalent binary subdivision schemes},
J. Math. Anal. Appl. {\bf 193} (1995) 594-621.

\bibitem{DL-02}
{\sc N. Dyn, D. Levin},
{\em Subdivision schemes in geometric modelling},
Acta Numerica {\bf 11}, (2002), 73-144.

\bibitem{DLL-03}
{\sc N. Dyn, D. Levin, A. Luzzatto},
{\em Exponential reproducing subdivision schemes},
Found. Comput. Math., {\bf 3} (2003) 187–206.


\bibitem{DLY-14}
{\sc N. Dyn, D. Levin, J. Yoon},
{\em A new method for the analysis of univariate nonuniform subdivision schemes},
Constr. Approx. {\bf 40} (2014) 173-188.

\bibitem{FMW-10}
{\sc M.-e Fang, W. Ma, G. Wang},
{\em A generalized curve subdivision scheme of arbitrary order with a tension parameter},
Comput. Aided Geom. Design, {\bf 27} (2010) 720-733.

\bibitem{GQ-96}
{\sc J. A. Gregory, R. Qu},
{\em Nonuniform corner cutting},
Comput. Aided Geom. Design, {\bf 13} (1996) 763-772.

\bibitem{JSD-03}
{\sc M. K. Jena, P. Shunmugaraj, P. C. Das},
{\em A non-stationary subdivision scheme for curve interpolation}
Anziam J. {\bf 44}(E) (2003) 216–235.


\bibitem{JKLY-13}
{\sc B. Jeong, H. O. Kim, Y. J. Lee, J. Yoon},
{\em Exponential polynomial reproducing property of non-stationary symmetric
subdivision schemes and normalized exponential B-splines},
Adv. Comput. Math. {\bf 38} (2013) 647-666.

\bibitem{JLY-13}
{\sc B. Jeong, Y.J. Lee, J. Yoon},
{\em A family of non-stationary subdivision schemes reproducing exponential polynomials},
J. Math. Anal. Appl. {\bf 402}(1) (2013) 207–219.

\bibitem{KU}
{\sc I. Khalidov and M. Unser}
{\em From differential equations to the construction of wavelet-like bases},
IEEE Tran. Sig. Proc.,  54 (2005), 1256-1267.

\bibitem{L-03}
{\sc A. Levin},
{\em Polynomial generation and quasi-interpolation in stationary non-uniform subdivision},
Comput. Aided Geom. Design, {\bf 20}(1) (2003) 41–60.

\bibitem{R-88}
{\sc A. Ron},
{\em Exponential box splines},
Constr. Approx., {\bf 4} (1988) 357-378.

\bibitem{R-15}
{\sc L. Romani},
{\em A Chaikin-based variant of Lane–Riesenfeld algorithm and its
non-tensor product extension},
Comput. Aided Geom. Design, {\bf 32} (2015) 22-49.

\bibitem{TP-20}
{\sc Y. Tian, M. Pan},
{\em Corner-cutting subdivision surfaces of general degrees with parameters},
J. Comput. Math. {\bf 38}(5) (2020) 710-725.

\bibitem{UB}
{\sc M. Unser and T. Blu} {\em Cardinal Exponential Splines :
Part I -Theory and Filtering Algorithms},
IEEE Tran. Sig. Proc.,  53 (2005), 1425-1438.

\bibitem{VTU}
{\sc C. Vonesch, T. Blu and M. Unser} {\em Generalized Daubechies
Wavelet Families}, IEEE Tran. Sig. Proc., 55 (2007), 4415-4429.
\end{thebibliography}
\end{document}